\title[Cuspidal cyclic cubic fourfold]
{The Fano variety of lines\\
of a cuspidal cyclic cubic fourfold}
\author{Samuel Boissi\`ere}
\address{Samuel Boissi\`ere,
	Laboratoire de Math\'ematiques et Applications,
	UMR 7348 du CNRS,
	B\^atiment H3,
	Boulevard Marie et Pierre Curie,
	Site du Futuroscope,
	TSA 61125,
	86073 Poitiers Cedex 9,
	France}
\email{samuel.boissiere@univ-poitiers.fr}
\urladdr{http://www-math.sp2mi.univ-poitiers.fr/$\sim$sboissie/}
\author{Tobias Heckel}
\address{Tobias Heckel,
		Gottfried Wilhelm Leibniz Universit{\"a}t Hannover, Institut f{\"u}r Algebraische Geometrie, Welfengarten 1, 30167 Hannover, Germany}
\email{heckel@math.uni-hannover.de}
\author{Alessandra Sarti}
\address{Alessandra Sarti,
	Laboratoire de Math\'ematiques et Applications,
	UMR 7348 du CNRS,
	B\^atiment H3,
	Boulevard Marie et Pierre Curie,
	Site du Futuroscope,
	TSA 61125,
	86073 Poitiers Cedex 9,
	France}
\email{alessandra.sarti@univ-poitiers.fr}
\urladdr{http://www-math.sp2mi.univ-poitiers.fr/$\sim$sarti/}
\date{\today}
\newtheorem{theorem}{Theorem}[section]
\newtheorem*{theoremintro}{Theorem}
\newtheorem{lemma}[theorem]{Lemma}
\newtheorem{corollary}[theorem]{Corollary}
\theoremstyle{definition}
\theoremstyle{remark}
\newtheorem{remark}{Remark}[section]
\numberwithin{equation}{section}
\newcommand{\IC}{\mathbb C}
\newcommand{\IP}{\mathbb P}
\newcommand{\IZ}{\mathbb Z}
\newcommand{\cC}{\mathscr{C}}
\newcommand{\cH}{\mathcal{H}}
\newcommand{\cM}{\mathcal{M}}
\newcommand{\cO}{\mathcal{O}}
\newcommand{\cP}{\mathcal{P}}
\newcommand{\cS}{\mathcal{S}}
\newcommand{\cX}{\mathcal{X}}
\newcommand{\cZ}{\mathcal{Z}}
\newcommand{\HH}{\mathrm{H}}
\newcommand{\BL}{\mathrm{Bl}}
\newcommand{\ta}{\tilde a}
\newcommand{\node}{\vartheta} 
\newcommand{\cusp}{\varsigma}
\newcommand{\eqY}{F}  
\newcommand{\eqC}{f}  
\newcommand{\NS}[1]{\mathrm{NS}(#1)}
\newcommand{\Pic}[1]{\mathrm{Pic}(#1)}
\newcommand{\Fano}[1]{\mathrm{F}(#1)}
\newcommand{\Jac}{\mathrm{J}}
\newcommand{\Sym}[2]{\mathrm{Sym}^{#1} \left({#2}\right)}
\newcommand{\Aut}[1]{\mathrm{Aut}(#1)}
\newcommand{\Hilb}[2]{{#2}^{[#1]}}
\newcommand{\Blow}[2]{\mathrm{Bl}_{#1}(#2)}
\newcommand{\Grass}[2]{\mathrm{Gr}(#1, \IP^{#2})}
\newcommand{\Span}{\mathrm{Span}}
\newcommand{\reg}{\mathrm{reg}}
\DeclareMathOperator{\orth}{O}
\DeclareMathOperator{\id}{id}
\begin{document}

\begin{abstract}
	We prove that the Fano variety of lines of a cuspidal cyclic cubic fourfold is a symplectic variety with transversal $A_2$-singularities and we study the properties of the nonsymplectic order three automorphism induced by the covering automorphism on the irreducible holomorphic symplectic manifold obtained by blowing up the singular locus.
\end{abstract}

\keywords{}

\maketitle



\section{Introduction}

The classification of the automorphisms and birational transformations of irreducible holomorphic (IHS) manifolds is an intensive area of research that uses  the global Torelli theorem of Markmann--Verbitsky~\cite{Markman_survey, Verbitsky}  and the recent results of Bayer--Macr\`i~\cite{BM} to describe the nef and movable cones. Cuspidal cyclic cubic fourfolds  are constructed as triple coverings of nodal cubic threefolds. Their Fano varieties of lines are important since their periods are limit points under nodal degenerations of smooth cubic threefolds. They have been studied in~\cite{BCScubic} in the framework of the construction of moduli spaces, period maps and period domains of nonsymplectic automorphisms on IHS manifolds deformation equivalent to the Hilbert square of a K3 surface, as a hyper-K\"ahler interpretation of the theory developed by Allcock--Carlson--Toledo~\cite{ACT}. The meaning of the degeneracy of the automorphism is that when the period point
goes to the closure of the period domain, the automorphism of the family jumps to another family with a bigger invariant lattice. In~\cite{BCSball, BCScubic} the limit family is constructed using Hilbert squares of K3 surfaces with natural automorphisms, following the classification of Artebani--Sarti~\cite{AS}.
 
Starting from a nodal cubic threefold $C\subset\IP^4$, the triple covering of $\IP^4$ ramified along $C$ defines a cuspidal cyclic cubic fourfold $Y$. We denote by~$\sigma$ the automorphism of its Fano variety of lines $\Fano Y$ induced by the covering automorphism~$\iota$. The singular locus $\Sigma$ of $\Fano Y$ is a K3 surface with an automorphism~$\tau$ induced by~$\iota$. Theorem~\ref{th:Fano_cuspidal_cyclic_cubic} is the main result of this note. We describe the singularities of the Fano variety of lines of a cuspidal  cyclic cubic fourfold, its symplectic resolution and the corresponding divisorial contraction
of the Hilbert square of the underlying K3 surface. We are mostly interested in the behavior of the automorphisms induced by the covering automorphism:

\begin{theoremintro}{(Theorem~\ref{th:Fano_cuspidal_cyclic_cubic})}
	Let $Y$ be a cuspidal cyclic cubic fourfold with one cusp~$\cusp$.
	\begin{enumerate}
		\item  There exists a birational map $\varphi \colon \Hilb 2 \Sigma \dashrightarrow \Fano Y$ which commutes with the actions of $\Hilb 2 \tau$ and $\sigma$. If $Y$ contains no plane through $\cusp$, then $\varphi$ is everywhere defined and it contracts a divisor~$\Psi$ to the surface~$\Sigma$.
		
		\item  The variety $\Fano Y$ has symplectic singularities, they are $A_2$-transversal along the surface $\Sigma$ and $\Blow \Sigma {\Fano Y}$ is a symplectic resolution of $\Fano Y$. 
		
		\item Assume that $Y$ contains no plane through $\cusp$. The blowup $\rho$ of $\Sigma$ in~$\Fano Y$ is an elimination of the indeterminacies of the rational map $\varphi^{-1}$:
		\[
		\xymatrix{
			\Blow \Sigma {\Fano Y} \ar[d]^\rho \ar[dr]^{\widetilde{\varphi^{-1}}}  & \\
			\Fano Y \ar@{-->}[r]^-{\varphi^{-1}}                      & \Hilb 2 \Sigma
		}
		\]
		The morphism $\widetilde{\varphi^{-1}}$  is an isomorphism which maps isomorphically the exceptional divisor of the blowup to the prime divisor $\Psi$.
		
		\item  The automorphism $\sigma$ goes up to an order three nonsymplectic automorphism~$\widetilde\sigma$ on~$\Blow \Sigma {\Fano Y}$. If $Y$~contains no plane through~$\cusp$,
		the isomorphism~$\widetilde{\varphi^{-1}}$ commutes with the actions of $\widetilde\sigma$ and $\Hilb 2 \tau$.
	\end{enumerate}
\end{theoremintro}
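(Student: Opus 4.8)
The plan is to build everything from the geometry of the lines through the cusp together with the classical residual-line construction. I would first choose coordinates on $\IP^5$ so that $\cusp=(1:0:\cdots:0)$ and $Y=\{x_5^3=x_0\,q(x_1,\dots,x_4)+c(x_1,\dots,x_4)\}$, with $q$ a quadratic form of rank $4$ (whose vanishing is the projectivised tangent cone of the nodal threefold $C$ at its node) and $c$ a cubic. A direct computation then shows that a line of $Y$ through $\cusp$ corresponds to a point $(w_1:\dots:w_4:w_5)$ with $q(w_1,\dots,w_4)=0$ and $w_5^3=c(w_1,\dots,w_4)$, so $\Sigma$ is the $(2,3)$ complete intersection cut out by these equations in the $\IP^4$ of directions at $\cusp$; it is the cyclic triple cover of the smooth quadric surface $Q=\{q=0\}\subset\IP^3$ branched along the $(3,3)$ curve $Q\cap\{c=0\}$, with covering automorphism $\tau$, which is where the K3 structure comes from, and $\iota$ fixes $\cusp$ and induces $\tau$ on $\Sigma$ through its action on the directions at $\cusp$. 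Now for $p\neq q$ in $\Sigma$ the two lines $\ell_p,\ell_q$ through $\cusp$ span a plane $P_{p,q}\ni\cusp$, the plane cubic $P_{p,q}\cap Y$ contains $\ell_p\cup\ell_q$, and I set $\varphi([p,q])$ to be the residual line; this extends over the diagonal of $\Hilb 2 \Sigma$ using tangent directions in the universal family of lines, and it is birational because for a general line $\ell$ of $Y$ (which avoids $\cusp$) the plane spanned by $\ell$ and $\cusp$ cuts $Y$ along $\ell$ and a conic which, being forced to have multiplicity two at the double point $\cusp$, splits into two lines through $\cusp$ and so recovers $\varphi^{-1}(\ell)$. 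Equivariance is formal: $\iota$ preserves $Y$, fixes $\cusp$, and permutes the planes through $\cusp$, so it carries the construction to itself and $\varphi\circ\Hilb 2 \tau=\sigma\circ\varphi$. If $Y$ contains no plane through $\cusp$ then $P_{p,q}$ is never contained in $Y$, the residual line is always defined, $\varphi$ is a morphism, and its non-isomorphism locus $\varphi^{-1}(\Sigma)$ is exactly the set of $[p,q]$ for which the images of $p$ and $q$ in $Q\cong\IP^1\times\IP^1$ lie on a common ruling, i.e.\ the divisor $\Psi$ assembled from the relative Hilbert squares of the two elliptic fibrations of $\Sigma$, with $\varphi|_\Psi\colon\Psi\to\Sigma$ of one-dimensional fibres.

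\textbf{The singularities (item (2)).} Here $\Fano Y$ is a normal local complete intersection whose smooth locus $\Fano Y\setminus\Sigma$ carries the usual holomorphic symplectic form. The heart of the proof is the local model along $\Sigma$: I would write down the four equations cutting out $\Fano Y$ in the eight-dimensional affine chart of $\Grass 1 5$ around a general line $\ell_0$ through $\cusp$; three of them have linearly independent linear parts (produced by the linear forms that $q$ and $c$ acquire at $\ell_0$), and eliminating the corresponding three coordinates reduces $\Fano Y$ locally to a hypersurface whose equation has quadratic part of rank exactly two and a nonzero cube in the one-dimensional kernel direction (the surviving $x_5^3$ term), so that by finite determinacy the transversal slice is the $A_2$ Du Val singularity. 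Hence $\Fano Y$ has transversal $A_2$ singularities along the smooth surface $\Sigma$ and is smooth elsewhere, and is Gorenstein with trivial canonical class; since a single blow-up of an $A_2$ surface singularity is already its minimal resolution, $\Blow\Sigma{\Fano Y}$ is smooth, this resolution is crepant, and the symplectic form extends over it, so $\Fano Y$ has symplectic singularities and $\Blow\Sigma{\Fano Y}$ is a symplectic resolution.

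\textbf{Items (3), (4) and the main obstacle.} Under the hypothesis on planes, $\varphi\colon\Hilb 2 \Sigma\to\Fano Y$ is a crepant birational morphism from a smooth variety which is an isomorphism over $\Fano Y\setminus\Sigma$; by uniqueness of the crepant resolution of a transversal Du Val singularity it is identified with $\rho\colon\Blow\Sigma{\Fano Y}\to\Fano Y$, which produces the isomorphism $\widetilde{\varphi^{-1}}$ and identifies the exceptional divisor of $\rho$ with $\Psi$, giving item (3). For item (4), $\sigma$ preserves the singular locus $\Sigma$ and so lifts canonically to an order $3$ automorphism $\widetilde\sigma$ of $\Blow\Sigma{\Fano Y}$; it is nonsymplectic because, $\Fano Y$ being birational to $\Hilb 2 \Sigma$, the automorphism $\widetilde\sigma$ acts on $\HH^{2,0}$ the way $\Hilb 2 \tau$ does, i.e.\ as $\tau^*$ acts on $\HH^{2,0}(\Sigma)$, and $\tau$ is a nonsymplectic automorphism of the K3 surface $\Sigma$ since it fixes the genus $4$ branch curve. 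Finally, when $Y$ contains no plane through $\cusp$, the relation $\varphi^{-1}\circ\sigma=\Hilb 2 \tau\circ\varphi^{-1}$ from item (1) lifts uniquely along $\rho$ and says exactly that $\widetilde{\varphi^{-1}}$ commutes with $\widetilde\sigma$ and $\Hilb 2 \tau$. The step I expect to be the main obstacle is the local computation of item (2): one must keep careful track of which linear forms coming from $q$ and $c$ survive the elimination and verify that the transversal quadratic form has rank exactly two, so that the transversal type is $A_2$ and not $A_1$ or something worse; this is the only genuinely computational point, everything else being either a classical plane-cubic argument or a formal consequence of the theory of transversal Du Val singularities on holomorphic symplectic varieties.
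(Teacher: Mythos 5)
Your constructions in items (1), (3) and (4) — the residual-line definition of $\varphi$, the recovery of $\varphi^{-1}$ from the singular residual conic, the equivariance under $\iota$, and the lift of $\sigma$ to the blowup — match the paper's proof, and your description of $\Psi$ as the union of the two relative Hilbert squares of the elliptic fibrations coming from the rulings of $\{q=0\}\subset\IP^3$ is a correct and illuminating reformulation of the paper's ``trident'' divisor (consistent with the corollary that $\Psi$ has two components generating an $A_2(-1)$). The genuine gap is in item (2), and it is exactly where the paper invests most of its effort: you never establish the local \emph{product} structure $U\cong V\times S$ that the phrase ``transversal $A_2$-singularities'' means, and everything downstream (smoothness and crepancy of $\Blow\Sigma{\Fano Y}$, hence the symplectic resolution) depends on it. After your elimination, the remaining equations genuinely mix the base coordinates of $\Sigma$ with the fibre coordinates (in the paper's notation a term such as $-2b_1(p_1,p_0)$ survives), so computing that the fibre over the origin has an $A_2$ point does not show that the germ is a product; ``finite determinacy'' governs a single isolated singularity, not a family over a two-dimensional base. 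The paper closes this by first proving that $\Fano Y$ has symplectic singularities (trivial canonical class, canonical hence rational singularities, extension of the $2$-form pulled back from $\Hilb 2 \Sigma$ across a codimension-three locus, Namikawa's criterion) and only then invoking Kaledin's stratification theorem to obtain the product decomposition, after which the explicit computation merely identifies the type of the slice. Relatedly, your assertion that the smooth locus ``carries the usual holomorphic symplectic form'' needs an argument for a singular cubic fourfold; the paper supplies one. Your route could be repaired by citing local analytic triviality of $\mu$-constant families of ADE hypersurface singularities over a smooth base, but some such input must be named.

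A second, smaller gap is your identification of $\varphi$ with $\rho$ in item (3) ``by uniqueness of the crepant resolution of a transversal Du Val singularity''. For $A_1$ this is Yamagishi's theorem, but for transversal $A_2$ it is not a standard quotable fact and you give no argument; a priori $\Hilb 2 \Sigma$ and $\Blow \Sigma {\Fano Y}$ could be crepant resolutions differing by a flop, in which case $\widetilde{\varphi^{-1}}$ would fail to be a morphism. The paper instead computes the blowup explicitly in Pl\"ucker coordinates, checks that its exceptional fibres coincide with the parameter spaces of planes $P_a$ used to define $\varphi^{-1}$, and concludes that $\widetilde{\varphi^{-1}}$ is a bijective birational morphism, hence an isomorphism by Zariski's main theorem.
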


In Remark~\ref{rem:fix_locus} we describe the fix locus of $\tilde\sigma$ and in \S\ref{ss:degeneration} we interpret the result in the context of nodal degenerations of cubic threefolds and holomorphic extensions of period maps: the pairs $\left(\BL_\Sigma\Fano {Y}, \tilde\sigma\right)$ and 
$\left(\Sigma^{[2]}, \tau^{[2]}\right)$ are equivariantly birational to each other and nonseparated in moduli space $\cM_L^\circ$ of $6$-polarized IHS manifolds, so both can be defined as limit points under a nodal degeneration of the underlying cubic threefold.

\medskip

To better understand the peculiarity of the birational geometry of the Fano variety of lines on a cuspidal cyclic fourfold, it is instructive to take a step backwards to study first the birational geometry of the Fano variety of lines on a nodal cubic fourfold, leaving apart for an instant the degeneration of automorphisms. Their geometry appears many times in the literature. Let us briefly recall two main reasons  why the nodal setup has become so popular.

First, it provides a nice example
of  \emph{symplectic singularity}, in
the sense defined by Beauville~\cite{Beauville_symplectic}:
its nonsingular locus admits a holomorphic symplectic form whose pullback extends to a holomorphic $2$-form
on any resolution of singularities of~$\Fano Y$, and it admits a \emph{symplectic resolution}, where the extended holomorphic $2$-form remains nondegenerate, obtained
by a single blowup of its singular locus, which is generically a K3 surface $\Sigma$. This has been first proved by Hassett~\cite[Lemma~6.3.1]{Hassett_special} by constructing a birational map from $\Hilb 2 \Sigma$ to $\Fano Y$. The result is recalled by Gounelas--Ottem~\cite[p.13]{GounelasOttem} and is generalized by Lehn~\cite[Theorem~3.6]{Lehn} to the case when the K3 surface $\Sigma$ is singular, by considering its minimal resolution $\widetilde\Sigma$.
Yamagishi~\cite{Yamagishi} further developed this study by proving that $\Fano Y$ has the same analytic type of singularity as the Hilbert scheme of two points on $\Sigma$ and that $\Fano Y$ has only one symplectic resolution up to isomorphism. Assuming that $Y$ has only simple singularities and no plane, he observed that the indeterminacy of the birational map $\widetilde\Sigma^{[2]}\to \Sigma^{[2]}$ can be resolved by a sequence of flops to produce a symplectic resolution of $\Sigma^{[2]}$, hence of $\Fano Y$ since the birational map $\Sigma^{[2]}\to\Fano Y$ is regular when $Y$ contains no plane.

Secondly, from the point of view of the  variety $\Hilb 2 \Sigma$, this setup provides an example of an irreducible holomorphic symplectic manifold with two divisorial contractions that generically describe its cone of numerically effective (nef) divisors. These contractions appear for instance in Hassett-Tschinkel~\cite{HassettTschinkel_moving, HassettTschinkel_intersection, HassettTschinkel_extremal}, Bayer--Macr\`i~\cite{BM} and Debarre--Macr\`i~\cite{DebarreMacri}.

\medskip

The geometry of the Fano variety of lines on a  cuspidal cyclic fourfold is more subtle than the nodal case, but it can be treated by a slight modification of the arguments. Since the nodal case is more familiar, we chose to develop the proof in the nodal case and to focus on the main changes needed in the cuspidal one. 
Our objective in Theorem~\ref{th:Fano_nodal_cubic} is to summarize all the geometric properties in the nodal setup, with emphasis on the properties of the birational maps involved, and to give a new and self-contained proof based on the stratification theorem of Kaledin~\cite{Kaledin_poisson} and using, as much as possible, only elementary geometric arguments. The benefit of our method is that it permits to have a deep control on the geometry of the blowup: this will be needed in the second part of this note to control the birational geometry and the automorphisms in the cuspidal cyclic setup considered in Theorem~\ref{th:Fano_cuspidal_cyclic_cubic}. 

As a byproduct, we recover in \S\ref{ss:divisorial}, using the geometric description used in the proof of Theorem~\ref{th:Fano_nodal_cubic} and direct geometric computations, the description of the extremal rays of the nef cone of $\Hilb 2 \Sigma$, in accordance with the results of~\cite{BM} (see also~\cite[Example~5.3]{DebarreMacri}).

The authors warmly thank Lucas Li Bassi, Chiara Camere, Bert van Geemen,  Franco Giovenzana, Luca Giovenzana, Klaus Hulek and Gianluca Piacenza for helpful discussions during the preparation of this work. The results of this paper are partially contained in the PhD thesis~\cite{Heckel} of the second author.


\section{Geometry of the Fano variety of lines of a cubic fourfold}

\subsection{Definition} \label{ss:def_fano}
Let $Y \subset  \IP^5$ be a complex cubic fourfold, that is the zero locus of a degree three homogeneous polynomial $\eqY$. We denote by $\Fano Y$ the Fano scheme of lines contained in $Y$, considered as a closed subscheme of the Grassmannian $\Grass 1 5$ of lines in~$\IP^5$. 
For any line $\ell \subset \IP^5$, we denote by $|\ell]$ the corresponding point in $\Grass 1 5$. Set-theorerically:
\[
\Fano Y \coloneqq \{ [\ell] \in \Grass 1 5 \,|\, \ell\subset Y\} = \{[\ell] \in \Grass 1 5\,|\, \eqY\vert_{\ell} = 0\}.
\]
Following Altman--Kleiman~\cite{AK}, we define the scheme structure of $\Fano Y$ as follows. Let $V$ be a six-dimensional complex vector space. We put $\IP ^ 5 = \IP(V)$, the equation~$\eqY$ of~$Y$ is an element of the symmetric power $\Sym 3 {V^\ast}$. Consider the incidence variety:
\[
\cS\coloneqq \{([\ell], v) \in \Grass 1 5 \times V\,|\, v\in \ell\}.
\]
Any line $\ell \subset \IP^5$ is identified with the plane $\Pi_\ell \subset V$ such that $\ell = \IP(\Pi_\ell)$.
The fiber of the projection $\cS \to \Grass 1 5$ over the point $[\ell]$ is the plane $\Pi_\ell$ and this projection
makes $\cS$ a rank two vector bundle over $\Grass 1 5$.
The restriction of~$\eqY$ to the line~$\ell$ defines an element of $\Sym 3 {\Pi_\ell^\ast}$. We thus construct a regular section~$s_\eqY$ of the vector bundle $\Sym 3 {\cS^\ast}$:
\[
s_\eqY\colon \Grass 1 5 \to \Sym 3 {\cS^\ast}, \quad [\ell] \mapsto \eqY_{|\ell}.
\]
The Fano scheme $\Fano Y$ is defined as the subscheme of zeros of this section (see~\cite[Theorem~1.3]{AK}):
\[
\Fano Y \coloneqq \cZ(s_\eqY).
\]

\subsection{Singularities} \label{ss:singularities}

Throughout this paper, we consider a cubic fourfold $Y$ with one single simple isolated singularity of 
type either $A_1$ or $A_2$. 
In this geometric setup, by results of Altmann-Kleiman~\cite{AK}, the scheme $\Fano Y$ is a locally complete intersection,
it is reduced, normal, connected and irreducible (see also~\cite[Proposition~3.5]{Lehn}).
As a consequence, $\Fano Y$ has Gorenstein singularities.

The variety $\Fano Y$ is smooth at every point~$[\ell]$ corresponding to a line $\ell \subset Y$ 
which does not pass through its singular point. This is a general and classical result on Fano schemes, 
true in greater generality than ours (see for instance~ \cite[Corollary~1.11]{AK}, 
\cite[Lemma~7.7]{ClemensGriffiths} or \cite{Huybrechts}).  
Assuming that $Y$ does not contain a plane, the subspace $\Sigma \subset \Fano Y$ 
parametrizing those lines through  the singular point is a normal surface of degree~$6$ in~$\IP^4$, whose minimal resolution is a K3 surface. The surface $\Sigma$ is contained in the singular locus of $\Fano Y$ and, in good situations, it is itself smooth 
and it is exactly the singular locus of~$\Fano Y$ (see~\cite[Lemma~3.3]{Lehn}).

\section{The Fano variety of lines of a nodal cubic fourfold}

Let $Y$ be a cubic fourfold with one single ordinary double point $\node$.
In a suitable homogeneous coordinate system $(x_0 : \ldots : x_5)$ of 
$\IP^5$, the equation of~$Y$ may be written as:
\begin{align} 
	\label{eq:nodal_fourfold}
	\eqY(x_0, \ldots, x_5) = x_0 q(x_1, \ldots,x_5) + k(x_1, \ldots, x_5),
\end{align}
where $q$ is a nondegenerate quadratic form and is $k$ is a cubic form. The node~$\node$ has coordinates $[1:0:\ldots:0]$.
The zero loci of~$q$ and~$k$ define in the hyperplane~${H_0 \coloneqq \{x_0 = 0\}}$ of $\IP^5$ respectively a smooth quadric threefold~$Q$ and a cubic threefold~$K$.
Any line $\ell \subset Y$ passing through~$\node$ cuts the hyperplane~$H_0$ at a point of~$Q\cap K$. Since $Y$~has no other singularity than~$\node$, the intersection~$Q\cap K$ is nonsingular. In the sequel, we thus identify the locus $\Sigma \subset \Fano Y$ of those lines through~$\node$ with the K3 surface~$Q\cap K$.

The cubic fourfold $Y$ contains a plane passing through $\node$ if and only if the K3 surface $\Sigma$ contains a line. Indeed, if $Y$ contains a plane $P$ through~$\node$, then the lines through~$\node$ inside the plane~$P$ cut the hyperplane $H_0$ along a line which is contained in $\Sigma$ by construction. Conversely, if $\Sigma$ contains a line $\ell$, then the lines $\Span(x, \node)$ for $x\in \ell$ generate a plane contained in~$Y$. 
Degtyarev~\cite[Theorem~1.2]{Degtyarev} proved that the maximum number of lines on such K3 surfaces, or of planes in~$Y$ through $\node$, is the famous number $42$ (see~\cite{fortytwo}).

\subsection{Symplectic resolution of $\Fano Y$}

\begin{theorem} Let $Y$ be a cubic fourfold with one single ordinary double point $\node$.
	\label{th:Fano_nodal_cubic}
	\begin{enumerate}
		\item  \label{i:nodal1} There exists a birational map $\varphi \colon \Hilb 2 \Sigma \dashrightarrow \Fano Y$. If $Y$ contains no plane through $\node$, then $\varphi$ is everywhere defined and it contracts a divisor~$\Psi$ to the surface~$\Sigma$.
		
		\item  \label{i:nodal2} The variety $\Fano Y$ has symplectic singularities, they are $A_1$-transversal along the surface $\Sigma$ and $\Blow \Sigma {\Fano Y}$ is a symplectic resolution of $\Fano Y$.
		
		\item \label{i:nodal3} Assume that $Y$ contains no plane through $\node$. The blowup $\rho$ of $\Sigma$ in~$\Fano Y$ is an elimination of the indeterminacies of the rational map $\varphi^{-1}$:
		\[
		\xymatrix{
			\Blow \Sigma {\Fano Y} \ar[d]^\rho \ar[dr]^{\widetilde{\varphi^{-1}}}  & \\
			\Fano Y \ar@{-->}[r]^-{\varphi^{-1}}                      & \Hilb 2 \Sigma
		}
		\]
		The morphism $\widetilde{\varphi^{-1}}$ is an isomorphism which maps isomorphically the exceptional divisor of the blowup to the prime divisor $\Psi$, which is a conic bundle over $\Sigma$.
	\end{enumerate}
\end{theorem}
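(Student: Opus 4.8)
The plan is to exploit the classical geometry of lines on a nodal cubic fourfold to produce the birational map in part \eqref{i:nodal1} explicitly, and then to analyze the blowup of $\Sigma$ directly so that parts \eqref{i:nodal2} and \eqref{i:nodal3} follow from local computations combined with Kaledin's stratification theorem. First I would construct $\varphi$. Given a line $\ell\subset Y$ not through $\node$, the plane $\Span(\ell,\node)$ meets $Y$ in a plane cubic curve containing $\ell$; the residual conic $C_\ell$ passes through two points of $\Sigma$ (counted with multiplicity), since the conic meets $H_0$ in two points lying on $Q\cap K=\Sigma$. This assigns to a generic $[\ell]$ a length-two subscheme of $\Sigma$, and one checks this rational assignment $\Fano Y\dashrightarrow \Hilb 2\Sigma$ is birational. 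Equivalently, running it in reverse: a generic pair of distinct points $p,q\in\Sigma$ determines the lines $\overline{p\node}$, $\overline{q\node}\subset Y$, whose span is a $3$-plane $\Pi$ meeting $Y$ in a cubic surface with a node at $\node$; the lines on that cubic surface residual to the pencil through $\node$ give the line in $Y$ attached to $\{p,q\}$. I would set up the incidence correspondence carefully so that the map $\varphi\colon\Hilb 2\Sigma\dashrightarrow\Fano Y$ is a morphism precisely where no line of $\Sigma$ intervenes, i.e.\ when $Y$ contains no plane through $\node$; then $\varphi$ contracts exactly the divisor $\Psi$ of subschemes $\{p,q\}$ lying on a line through $\node$ of $\Sigma$ — wait, more precisely the divisor parametrizing pairs for which the span $\Pi$ is tangent/degenerate — onto $\Sigma$, realizing $\Psi\to\Sigma$ as a conic bundle (the conic being the residual conic $C_\ell$ itself as $[\ell]$ runs over the fiber).

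Next, for \eqref{i:nodal2}, I would compute the singularity of $\Fano Y$ transversally to $\Sigma$. Fix a point $[\ell_0]\in\Sigma$, so $\ell_0$ passes through $\node$ and meets $H_0$ at a point $x\in Q\cap K$. Using the description of $\Fano Y$ as the zero scheme $\cZ(s_\eqY)$ of a section of $\Sym 3{\cS^\ast}$, I would write local coordinates on $\Grass 1 5$ near $[\ell_0]$ — parametrizing nearby lines by their two "endpoints," one a deformation of $\node$ and one a deformation of $x$ — and expand $\eqY\vert_\ell$ as a cubic in the line parameter. The equation \eqref{eq:nodal_fourfold} with $q$ nondegenerate forces the leading behavior to be governed by $q$, and the local model of $\Fano Y$ along $\Sigma$ comes out as $\Sigma$ times the surface singularity $\{uv=t^2\}$, i.e.\ an $A_1$-transversal singularity, exactly as for $\Hilb 2 T$ along the diagonal of a smooth surface $T$. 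Since $\Fano Y$ is a normal locally complete intersection with the symplectic form on its smooth locus (restriction of the Plücker-type form, or the form transported from $\Hilb 2\Sigma$ via $\varphi$), Kaledin's stratification theorem applies: the symplectic singularities are rational Gorenstein, the stratification by singularity type is by symplectic leaves, and transversal $A_1$ slices admit the minimal (crepant) resolution by a single blowup; hence $\Blow\Sigma{\Fano Y}$ is a symplectic resolution.

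For \eqref{i:nodal3}, assuming no plane through $\node$, $\varphi$ is a morphism, and I would show $\widetilde{\varphi^{-1}}$ exists and is an isomorphism by comparing two resolutions of $\Fano Y$. On one hand $\rho\colon\Blow\Sigma{\Fano Y}\to\Fano Y$ is a symplectic resolution by \eqref{i:nodal2}; on the other hand $\varphi\colon\Hilb 2\Sigma\to\Fano Y$ is a birational morphism from a smooth symplectic fourfold contracting the divisor $\Psi$ to $\Sigma$, so it is also a resolution, necessarily crepant since $\Hilb 2\Sigma$ has trivial canonical bundle and $\Fano Y$ is Gorenstein. Two crepant resolutions of a fourfold with transversal $A_1$ singularities along a surface differ by flops supported over the singular locus; but an $A_1$-transversal singularity has a \emph{unique} crepant resolution (the fiber over each point of $\Sigma$ is a single $\IP^1$, leaving no room for a flop), so $\Hilb 2\Sigma$ and $\Blow\Sigma{\Fano Y}$ are isomorphic over $\Fano Y$ — this is $\widetilde{\varphi^{-1}}$. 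It carries the exceptional divisor of $\rho$ isomorphically to $\Psi$, and since $\rho$ restricted to its exceptional divisor is the projectivized normal cone — here a $\IP^1$-bundle, matching the conic bundle structure on $\Psi$ — the last assertion follows, using that $\Psi\to\Sigma$ is a conic bundle from the explicit description of the residual conics $C_\ell$ in part \eqref{i:nodal1}.

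The main obstacle I expect is the local computation in \eqref{i:nodal2}: choosing coordinates on the Grassmannian adapted to a line of $\Sigma$ and honestly extracting the transversal equation $uv=t^2$ from the section $s_\eqY$ requires care, especially checking that nondegeneracy of $q$ is exactly what guarantees the $A_1$ type (and isolating the role of the "no plane" hypothesis, which in the general nodal case affects the \emph{global} structure of $\Sigma$ and the locus where $\varphi$ fails to be a morphism, not the transversal singularity type). A secondary subtlety is justifying rigorously that $\varphi$ is birational and identifying its indeterminacy locus and the contracted divisor $\Psi$ precisely — this is where the incidence-variety bookkeeping with residual conics must be made airtight, and where one must rule out that $\varphi$ contracts anything unexpected when $\Sigma$ is singular or when $Y$ contains a plane.
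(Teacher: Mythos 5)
Your part~(1) is essentially the paper's construction (residual line of the plane $\Span(\node,\xi)$, and for the inverse the degenerate conic $C_\ell$ splitting into two lines through $\node$), though two points need tightening: the trace $C_\ell\cap H_0$ lies on $K$ for free but lands on $Q$ only because $C_\ell$ is singular at $\node$, hence a union of lines through $\node$ contained in $Y$; and the fibre of $\Psi\to\Sigma$ over $[\ell_0]$ is \emph{not} the residual conic $C_{\ell_0}$ but a conic in the $\IP^3$ of planes containing $\ell_0$ (in the paper's coordinates, $h_1(a)=q_1(a)=0$), whose smoothness is what the nondegeneracy of $q$ actually delivers. Your part~(3) takes a genuinely different route: rather than computing the blowup in Pl\"ucker coordinates and applying Zariski's main theorem to the bijective birational morphism $\widetilde{\varphi^{-1}}$, you compare the two crepant resolutions $\rho$ and $\varphi$ and invoke uniqueness of the crepant resolution of a transversal $A_1$ singularity. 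This is legitimate (it is essentially Yamagishi's uniqueness result, quoted in the introduction, and can be justified by noting that $\rho$ has relative Picard rank one with no small relative contraction, so no flop is available), but it is less elementary than the paper's argument and loses the explicit identification of the exceptional fibres with conics of planes, which the paper reuses in the cuspidal case.

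The genuine gap is in part~(2). Kaledin's stratification theorem, and the local product decomposition $V\times S$ you want to read off, require that $\Fano Y$ have symplectic singularities in Beauville's sense: the symplectic form on the smooth locus must extend to a holomorphic $2$-form on a resolution. Merely exhibiting a symplectic form on the smooth locus is not enough, and neither of your candidate forms is available as stated: there is no ``Pl\"ucker-type'' symplectic form on the Grassmannian restricting to one on $\Fano Y$, and the form transported from $\Hilb 2 \Sigma$ via $\varphi$ is a priori defined only on the open set where $\varphi^{-1}$ is regular and bijective, whose complement inside the smooth locus is the union of the $\IP^1$'s of lines lying in planes of $Y$ through $\node$. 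The paper fills this in by proving $K_{\Fano Y}=0$, that $\Fano Y$ has canonical hence (Elkik) rational singularities via an elimination of indeterminacies of $\varphi^{-1}$, extending the form across that codimension-three locus using normality of the sheaf of $2$-forms, and concluding with Kebekus--Schnell and Namikawa's criterion; none of this appears in your plan. Relatedly, your order of reasoning is reversed: the product structure along $\Sigma$ is an \emph{output} of Kaledin's theorem, after which the local Grassmannian computation only needs to identify the singularity type of the fibre of a local projection to $\Sigma$; asserting directly from the equations that the local model is $\Sigma\times\{uv=t^2\}$ would require a separate local-triviality argument for the family of surface singularities.
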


\begin{proof}\textit{}
	
	\noindent \textit{Proof of assertion~\eqref{i:nodal1}}.  We construct the birational map $\varphi \colon \Hilb 2 \Sigma \dashrightarrow \Fano Y$ as follows. First take two distinct points $x_1, x_2 \in \Sigma$. By definition of $\Sigma$, the lines $\ell_i = \Span(\node, x_i)$, $i=1, 2$ are contained in $Y$. If the plane $P \coloneqq \Span(\ell_1, \ell_2)$ is not contained in $Y$, it cuts~$Y$ along a residual line $\ell_3$ and we define $\varphi(\{x_1, x_2\}) = [\ell_3]$. Note that $\ell_3$ may go through the node $\node$. Now take a reduced subscheme of length two on~$\Sigma$, determined by a closed point $x \in \Sigma$ and a 
	tangent direction $v\in T_x\Sigma = T_xQ\cap T_x K$. Denote by $\ell_0 = \Span(\node, x)$ the line contained in $Y$, by~$\ell_1$ the line defined by $x$ and $v$, and consider the plane $P \coloneqq \Span(\ell_0, \ell_1)$. Assuming that $P$ is not contained in $Y$, let us prove that $\ell_0$~is a double line for the plane~$P$.  We may choose coordinates such that~$\ell_0$ has equations $x_2 = \cdots = x_5 = 0$. Equation~\eqref{eq:nodal_fourfold} thus decomposes as follows:
	\begin{equation}
		\label{eq:nodal_fourfold_precise}
		\begin{aligned}
			q &= x_1 h_1(x_2, x_3, x_4, x_5) + q_1(x_2, x_3, x_4, x_5), \\
			k &= x_1 ^  2 h_2(x_2, x_3, x_4, x_5) + x_1 q_2(x_2, x_3, x_4, x_5) + k_1(x_2, x_3, x_4, x_5),
		\end{aligned}
	\end{equation}
	where $h_1, h_2$ are linear forms, $q_1, q_2$ are quadratic forms and $k_1$ is a cubic form.
	We may further assume that $\ell_1$ has equation $x_0 = x_3 = x_4 = x_5 = 0$. 
	We denote by~$\hat x$ the projection of $x$ to $H_0$ by the point $\node$. In the hyperplane $H_0$, we thus have the tangent space $T_{\hat x} Q = \ker(h_1)$ and $T_{\hat x} K = \ker(h_2)$. In particular, since $Q$ and $K$ meet transversally, $h_1$ and $h_2$ are not proportional. By assumption, $\ell_1\subset T_{\hat x}Q \cap T_{\hat x} K$, so $h_1$ and $h_2$ vanish along $\ell_1$: we deduce that their equations do not depend on the variable $x_2$. The plane $P$ has equation $x_3 = x_4 = x_5 = 0$. Since $P$ is not contained in $Y$, we get that the equation of $P\cap Y$ is nonzero and factorizes by $x_2^2$: this means that the plane $P$ cuts $Y$ along the double line $\ell_0$ and a residual line $\ell_2$, and we define $\varphi( \{x, v\}) = [\ell_2]$.
	
	Let us describe the birational inverse $\varphi^{-1}\colon \Fano Y \dashrightarrow \Hilb 2 \Sigma$. 
	Let $[\ell] \in \Fano Y \setminus\Sigma$. Consider the quadratic cone $\widehat Q \subset \IP^5$ with base $Q$ and vertex $\node$ and the plane $P \coloneqq \Span(\node, \ell)$. Either $P$ is contained in $\widehat Q$, or the intersection $P\cap \widehat Q$ is a plane conic singular at $\node$.
	
	\begin{itemize}
		\item First case : $P\cap \widehat Q$ is the union of two distincts lines $\ell_1, \ell_2$ through $\node$.
		Put $x_i = \ell \cap \ell_i$ and $\hat x_i \in H_0$ the projection of $x_i$ through $\node$, for $i = 1, 2$. We have $F(x_i) = 0$ and $q(\hat x_i) = 0$,  so $k(\hat x_i) = 0$. This means that the lines $\Span(\node, x_i)$ are contained in $Y$ and $\varphi^{-1}([\ell]) = \{\hat x_1, \hat x_2\}$.
		
		\item Second case : $P\cap \widehat Q$ is  a double line $\ell_0$. Similarly we put $x = \ell\cap \ell_0$
		and $\hat x$ denotes the projection of $x$ on $H_0$ by $\node$. As above, $\hat x \in \Sigma$. Let $\ell'$~be the intersection of $P$ with $H_0$: it is the projection of the line $\ell$ to $H_0$ by~$\node$. Similar computations as above convince that $\ell'\subset T_x\Sigma$. We thus put $\varphi^{-1}([\ell]) = \{x, \ell'\}$ as a length two subscheme of $\Sigma$.
		
		\item Third case : $P\subset \widehat Q$. For any point $x\in P$, $x\neq \node$, with projection $\hat x \in H_0$, the same argument as above shows that the line $\Span(\node, \hat x)$ is contained in~$Y$, so $P\subset Y$.
	\end{itemize}
	
	Assume now that $Y$ contains no plane through $\node$. The morphism $\varphi$ is an isomorphism between the open locus of $\Hilb 2 \Sigma$ of subschemes $\xi$ such that the plane $P = \Span(\node, \xi)$ cuts $Y$ along a third line that does not pass by $\node$, and the open locus $\Fano Y \setminus \Sigma$. It contracts to the surface $\Sigma$ the ``trident'' divisor:
	\begin{align}
		\label{trident_divisor}
		\Psi \coloneqq \{\xi \in \Hilb 2 \Sigma\,|\, \Span(\node, \xi) \cap Y \text{ consists in three lines through } \node\}.
	\end{align}
	We can also describe $\Psi$ as the set of those subschemes $\xi \in \Hilb 2 \Sigma$ such that the projective line $\ell_\xi\coloneqq\Span(\vartheta, \xi)\cap H_0$ generated by $\xi$ cuts $\Sigma$ in three points. Since $\ell_\xi$~already cuts the cubic $K$ in three points, this is equivalent to say that $\ell_\xi$~cuts the quadric~$Q$ in three points. Finally, $\Psi$~is the set of those~$\xi$ such that the line~$\ell_\xi$ is contained in~$Q$.
	
	Let us compute the fibres of the restricted morphism $\varphi \colon \Psi \to \Sigma$. Take $[\ell] \in \Sigma$ and choose coordinates as above so that equations~\eqref{eq:nodal_fourfold_precise} hold. Any plane containing~$\ell$ cuts the three-dimensional space plane $x_0 = x_1 = 0$ at a unique point $(0:0:a_2:a_3:a_4:a_5)$ with $a\coloneqq (a_2:a_3:a_4:a_5)\in \IP^3$ and we denote this plane by~$P_a$. 
	The intersection $P_a\cap Y$ is the plane cubic in $\IP^2$ with coordinates $(t_0:t_1:t_2)$, given by the equation $F(t_0, t_1, t_2 a) = 0$. 
	The fiber of $\varphi$ over $[\ell]$ is the locus of points $a\in \IP^3$ that parametrizes those planes $P_a$ such that $P_a\cap Y$ is the union of three non-necessarily distinct lines though~$\node$. Let us show that this locus is a nonsingular plane conic.
	The line $\ell$ has equation $t_2 = 0$ in this plane and we compute that the residual conic~$C_a$ has equation:
	\begin{align}
		\label{eq:residual_conic}
		h_1(a) t_0 t_1  + q_1(a) t_0 t_2  + h_2(a) t_1 ^ 2  + q_2(a)  t_1 t_2 + k_1(a) t_2 ^ 2 = 0.
	\end{align}
	The fibre of $\varphi$ over $[\ell]$ is described by those planes $P_a$ whose singular residual conic~$C_a$ is the union of two lines through $\node$. This is equivalent to the conditions:
	\begin{align}
		\label{eq:condition_residual}
		h_1(a) = q_1(a) = 0.
	\end{align}
	This defines a plane conic $\varphi^{-1}([\ell])$. Let us check that it is nonsingular. Assume that a point~$a \in \IP^3$ is a singular point of $\varphi^{-1}([\ell])$. Then the gradients vectors $\nabla h_1$ and $\nabla q_1(a)$ are proportional, but $\nabla h_1 \neq 0$ otherwise $q$~would be degenerate by Equation~\eqref{eq:nodal_fourfold_precise}, so there exists $\alpha\in \IC$ such that $\nabla q_1(a) = \alpha \nabla h_1$. Using the equation $q = x_1 h_1 + q_1$, we get $q(-\alpha, a) = 0$ and $\nabla q(-\alpha, a) = 0$: this is impossible since $q$~is nondegenerate. We conclude that the fibres of~$\varphi$ are isomorphic to nonsingular plane conics of equation~\eqref{eq:condition_residual} whose points~$a$ parametrize those planes~$P_a$ such that $P_a\cap Y$~is the union of three non-necessarily distinct lines though~$\node$.
	
	\medskip
	\noindent \textit{Proof of assertion~\eqref{i:nodal2}}. Let us first prove that $\Fano Y$ has symplectic singularities, using an argument close to~\cite[Theorem~3.6]{Lehn}.
	 By assertion~\eqref{i:nodal1},  the map $\varphi^{-1}$ is regular and bijective on the open subset $U$ of $\Fano Y^\reg \coloneqq \Fano Y \setminus \Sigma$ consisting of those lines not contained in any plane of $Y$ passing through its node. For each such plane, the locus of these lines form a $\IP^1$ in $\Fano Y$, so the complementary $Z$ of~$U$ in~$\Fano Y^\reg$ has codimension three. The open subset~$U$ is isomorphic to an open subset~$V$ of~$\Hilb 2 \Sigma$.  In particular, the canonical bundle of $U$ is trivial, hence $K_{\Fano Y^\reg} =  0$. Since $\Fano Y$ is irreducible and normal, this implies that  $K_{\Fano Y} = 0$ as a Cartier divisor. 
	 
	Consider an elimination of the indeterminacies of the birational map $\varphi^{-1}$. We have birational morphisms~$g, h$ and a commutative diagram:
	\[
	\xymatrix{ & W\ar[dl]_g \ar[dr]^h\ & \\ \Fano Y \ar@{-->}[rr]^{\varphi^{-1}} && \Hilb 2 \Sigma}
	\]
	where~$W$ can be further assumed to be nonsingular.
	The pullback by $h$ induces an isomorphism between the spaces of global sections of the canonical sheaves (see~\cite[III.6.1]{Shafarevich}):
	\[
	\HH^0(W, \omega_W) \cong \HH^0(\Hilb 2 \Sigma, \omega_{\Hilb 2 \Sigma}) = \HH^0(\Hilb 2 \Sigma, \cO_{\Hilb 2 \Sigma})  = \IC,
	\]
	so the canonical divisor $K_W$ is effective.
 We have trivially $K_W - g^\ast K_{\Fano Y} = K_W$ and we proved that $K_W$ is effective: by definition this means that $\Fano Y$ has canonical singularities. 
	By Elkik--Flenner theorem (see~\cite[\S3(C) \& p.363]{ReidYPG}) we deduce that $\Fano Y$ has rational singularities.
	
The open subset~$U$ is isomorphic to an open subset~$V$ of~$\Hilb 2 \Sigma$, so $U$~admits a symplectic form inherited from those of~$V$. 
	Since the sheaf of holomorphic two-forms $\Omega^2_{\Fano Y^\reg}$ is normal (see~\cite[Proposition~1.6]{Hartshorne_reflexive}), its sections do not depend on a closed subset $Z$ of codimension three, so $\HH^0(U, \Omega^2_U) = \HH^0(\Fano Y ^\reg, \Omega^2_{\Fano Y ^\reg})$. This shows that the symplectic form on~$U$ extends to a holomorphic two-form on $\Fano Y^\reg$, that is still closed since $\Fano Y$ has rational singularities (see Kebekus--Schnell~\cite[Theorem~1.13]{KS}) and nondegenerate otherwise it would degenerate along a divisor cutting also $U$.  By Namikawa theorem~\cite[Theorem~6]{Namikawa}, we deduce that $\Fano Y$ has symplectic singularities.
	
	By the stratification theorem of Kaledin~\cite[Theorem~2.3]{Kaledin_poisson}, as explained in Lehn--Mongardi--Piacenza~\cite[Proposition~2.2]{MLP} we deduce in our situation that $\Fano Y$~has transversal ADE singularities along the surface $\Sigma$: for any point $x\in \Sigma$, there exists an analytic neighborhood $U$ of $x$ in $\Fano Y$ that is isomorphic to $V\times S$, where $V$ is an open neighborhood
	of the origin in $\IC^2$ and $S$ is an ADE surface singularity. 
	It remains to determine the nature	of the singularity of $S$. This is a local computation, we use the notation of the proof of assertion~\eqref{i:nodal2}.
	By the Pl\"ucker embedding $\Grass 1 5 \hookrightarrow \IP^{14}$, the Grassmannian of lines in $\IP^5$ is locally isomorphic to the affine space $\IC^8$. The Pl\"ucker coordinates in an affine neighbourhood of the point $[\ell_0]$  characterize those lines passing through the points:
	\[
	(1 : 0 : - p_{1, 2} : - p_{1, 3} : - p_{1, 4} : - p_{1, 5}), 
	\quad 
	(0 : 1 : p_{0, 2} : p_{0, 3} : p_{0, 4} : p_{0, 5}).
	\]
	We put $p_j \coloneqq (p_{j, 2}, p_{j, 3}, p_{j, 4}, p_{j, 5})$ for $j = 0, 1$, so that such a line, that we denote by $\ell_{p_0, p_1}$, is parametrized by:
	\[
	x_0 = \lambda, \quad x_1 = \mu, 
	\quad 
	(x_2, x_3, x_4, x_5) = -\lambda p_1 + \mu p_0, \qquad \forall [\lambda:\mu] \in  \IP^1.
	\]
	By replacing in Equation~\eqref{eq:nodal_fourfold_precise} and extracting the coefficients in $\lambda^i\mu^{3-i}$, we get that $\Fano Y$~is defined in this chart $\IC^4\times \IC^4$ of coordinates $(p_0, p_1)$ as the zero locus $Z$ of the following four equations:
	\begin{equation}
		\label{eq:equations_fano_nodal}
		\begin{aligned}
			\psi_{3,0} &=q_1(p_1) + k_1(p_1), \\
			\psi_{0, 3} &= h_2(p_0) + q_2(p_0) + k_1(p_0), \\
			\psi_{2, 1} &= - h_1(p_1)  - 2b_1(p_1, p_0) + q_2(p_1) + k_1^{2,1}(p_1, p_0),\\
			\psi_{1, 2} &= h_1(p_0) - h_2(p_1) + q_1(p_0) - 2b_2(p_1, p_0) + k_1^{1,2}(p_1, p_0),
		\end{aligned}
	\end{equation}
	where $b_i$ are the bilinear forms associated to the quadratic forms $q_i$ and $k_1^{u, v}$ are the terms of bidegree $(u, v)$ in $k_1$ considered here as a cubic form in the variables~$\lambda, \mu$.
	In this chart, the surface $\Sigma \cap Z$ has equation $p_1 = 0$.
	The Jacobian matrix of $\Fano Y$ at the point $[\ell_0]$ of coordinates $p_0 = p_1 = 0$ is thus:
	\[
	\Jac_{\Fano Y}([\ell_0]) 
	=
	\begin{pmatrix}
		0 & \nabla {h_2} & 0              & \nabla {h_1} \\
		0 & 0            & - \nabla {h_1} & - \nabla {h_2}
	\end{pmatrix}.
	\]
	As we observed above, since $Q$ and $K$ meet transversally, the linear form $h_1$ and~$h_2$, or equivalently their gradients vectors $\nabla h_1$ and $\nabla h_2$, are not proportional. 
	By a linear change of variables in the variables $x_2, \ldots, x_5$ only, we may thus assume, without loss of generality, that $h_1(x_2, \ldots, x_5) = x_2$ and $h_2(x_2, \ldots, x_5) = x_3$. The Jacobian submatrix of the functions $\psi_{0, 3}, \psi_{1, 2}$ with respect to the variables $p_{0, 2}, p_{0, 3}$ is thus the identity matrix. By the holomorphic implicit function theorem, we may thus express the variables $p_{0, 2}, p_{0, 3}$ locally at the origin as holomorphic functions $\tilde p_{0, 2}, \tilde p_{0, 3}$ in the variables 
	$p_{0, 4}, p_{0, 5}, p_{1, 2}, p_{1, 3}, p_{1, 4}, p_{1, 5}$. In our affine chart, the variety~$\Fano Y$ is thus locally biholomorphic,  for $p_0 \in \IC^4$ in a neighborhood of the origin, to the subvariety $\cX \subset \IC^2\times \IC^4$ of coordinates $p_{0, 4}, p_{0, 5}, p_{1, 2}, p_{1, 3}, p_{1, 4}, p_{1, 5}$ given by the following two holomorphic equations:
	\begin{align*}
		\widetilde\psi_{3, 0}(p_1) &= q_1(p_1) - k_1(p_1),\\
		\widetilde\psi_{2, 1}(p_0, p_1) &=  - h_1(p_1) - 2b_1(p_1, p_0) + q_2(p_1) 
		+ k_1^{2,1}(p_1,p_0),
	\end{align*}
	where in the second equation, the coordinates $p_{0, 2}$ and $p_{0, 3}$ are
	replaced by their holomorphic expressions  $\tilde p_{0, 2}, \tilde p_{0, 3}$ in terms of the other variables.
	The coordinates $(p_{0, 4}, p_{0, 5})$ are local coordinates of $\Sigma$ at $[\ell_0]$, so over the origin $(p_{0, 4}, p_{0, 5}) = (0, 0)$ we have $p_0 = 0$. Since we know that $\Fano Y$ has transversal singularities along $\Sigma$, the type of the singularity is those of the fibre over the origin of the projection $\cX \to \IC^2$ to the coordinates $p_{0, 4}, p_{0, 5}$. This fibre is the surface~$\cX_0$ of~$\IC^4$ of coordinates~$p_1$ given by the two equations:
	\begin{align*}
		\overline\psi_{3, 0}(p_1) &= q_1(p_1) - k_1(p_1),\\
		\overline\psi_{2, 1}(p_1) &=  - h_1(p_1) + q_2(p_1).
	\end{align*}
	We observed in the proof of assertion~\eqref{i:nodal1} that the zero loci of $h_1$ and $q_1$ intersect transversally, so the restriction of the quadratic form $q_1$ to the hyperplane $\ker(h_1)$ has rank three. This means that $\cX_0$ has an $A_1$-singularity at the origin. We thus proved that~$\Fano Y$
	has transversal $A_1$-singularities along~$\Sigma$. The blowup $\Blow \Sigma {\Fano Y}$ is thus a crepant resolution of $\Fano Y$ (see for instance Perroni~\cite[Proposition~4.2]{Perroni}), and this is equivalent to be a symplectic resolution (see for instance~\cite[Proposition~1.6]{Fu}).

	\medskip	
	\noindent \textit{Proof of assertion~\eqref{i:nodal3}}. This is a local computation, we still use the notation of the proof of assertion~\eqref{i:nodal2} to compute the equations of the blowup of $\Sigma$ in an affine neighborhood of the line $\ell_0$ of equation $x_2 = \cdots = x_5 = 0$. Since $Z\cap \Sigma$ has equation $p_1 = 0$ (see Equation~\eqref{eq:equations_fano_nodal}), this blowup is given locally as the closure of the image of the regular morphism:
	\[
	Z \setminus (\Sigma \cap Z) \to Z \times \IP^3, 
	\quad (p_0, p_1) \mapsto \left( (p_0, p_1), (p_{1, 2} : p_{1, 3} : p_{1, 4} : p_{1, 5}) \right).
	\]
	We denote by $a \coloneqq (a_2 : a_3 : a_4 : a_5)$ the homogeneous coordinates of the $3$-dimensional projective space occuring in this blowup. Assuming for instance $a_5 \neq 0$, let us put 
	$\ta =\left(\frac{a_2}{a_5}, \frac{a_3}{a_5}, \frac{a_4}{a_5}, 1 \right)$. The blowup gives the relations $p_{1,i} = \frac{a_i}{a_5} p_{1, 5}$ and we make the change of variables $x = -\lambda p_{1, 5}\ta + \mu p_0$. Computing as above, we obtain the local equations of $\Blow \Sigma {\Fano Y}$ over $Z$:
	\begin{equation}
		\label{eq:equations_blow_fano_nodal}
		\begin{aligned}
			\overline\psi_{3, 0} &=  q_1(\ta) - p_{1, 5} k_1(\ta),\\
			\overline\psi_{0, 3} &= h_2(p_0) + q_2(p_0) + k_1(p_0),\\
			\overline\psi_{2, 1} &= -  h_1(\ta) - 2  b_1(\ta, p_0) + p_{1, 5} q_2(\ta) 
			+ p_{1, 5} k_1^{2,1}(\ta, p_0), \\
			\overline\psi_{1, 2} &= h_1(p_0) + q_1(p_0) - p_{1, 5} h_2(\ta) - 2 p_{1, 5} b_2(\ta, p_0)
			- p_{1, 5} k_1^{1,2}(\ta, p_0).
		\end{aligned}
	\end{equation} 
	The fibre of $\rho$ over $[\ell_0]$ is obtained by putting $p_0 = 0$ and $p_{1, 5} = 0$. We obtain after homogeneization:
	\[
	\rho^{-1}([\ell_0]) = \{a\in\IP^3\,|\, q_1(a) = h_1(a) = 0\}.
	\]
	We recover Equation~\eqref{eq:condition_residual}. This means that the blowup parametrizes the planes~$P_a$ containing~$\ell_0$ and such that the residual conic $C_a$ is the union of two lines by~$\node$. By the construction of $\varphi^{-1}$ explained in the proof of assertion~\eqref{i:nodal1}, the blowup thus eliminates the indeterminacies of $\varphi^{-1}$, since by assumption $Y$~contains no plane through $\node$. We obtain a morphism $\widetilde{\varphi^{-1}}$ which means geometrically that the coordinate $a$ selects one plane $P_a$ to determine the image length two subscheme on~$\Sigma$ by the construction explains in the proof of assertion~\eqref{i:nodal1}. The morphism $\widetilde{\varphi^{-1}}$ is thus birational and bijective, so it is an isomorphism by Zariski main theorem. By construction, it maps isomorphically the exceptional divisor of the blowup, which is a conic bundle over $\Sigma$, to the divisor $\Psi$.
\end{proof}

\begin{remark}
	Let $\ell \subset Y$ be a line passing through $\node$. The planes containing $\ell$ and such that the residual intersection with $Y$ is a degenerate conic (non necessarily passing through $\node$) are parametrized by a quintic surface $T$ in~$\IP^3$: this is a special case of a \emph{Togliatti surface}, the difference here is that the line $\ell$ is not generic (we refer to~\cite[\S2]{Supersac} for a recent and selfcontained synthesis on Togliatti surfaces). Using Equation~\eqref{eq:residual_conic}, we get that the equation of $T$ is:
	\[
	\det \begin{pmatrix}
		0 & h_1(a) & q_1(a)\\
		h_1(a) & 2 h_2(a) & q_2(a)\\
		q_1(a) & q_2(a) & 2 k_1(a)
	\end{pmatrix}
	= 0.
	\]
	It is a non-normal surface, whose singular locus is generically the curve of equations $h_1(a) = q_1(a) = 0$. The key geometric ingredients of the proof of Theorem~\ref{th:Fano_nodal_cubic} are the properties of the singular curve of this non-normal Togliatti surface.
	
\end{remark}


\subsection{Two divisorial contractions}
\label{ss:divisorial}

We  observed above two divisorial contractions of $\Hilb 2 \Sigma$, contracting a divisor to a locus isomorphic to
the K3 surface $\Sigma \subset \IP^4$, on two varieties with transversal $A_1$-singularities along $\Sigma$: the first one is the Chow quotient $\Sigma^{(2)}$, obtained by contracting the ``exceptional divisor'' $E$ of nonreduced subschemes, the second one is $\Fano Y$ obtained by contracting the ``trident'' divisor~$\Psi$ defined in Equation~\eqref{trident_divisor}. In particular $\Hilb 2 \Sigma$ is their unique crepant, hence symplectic resolution. These divisorial contractions have been studied by many authors from different point of views, see for instance \cite{BM, GounelasOttem,  HassettTschinkel_intersection, HassettTschinkel_extremal, KapustkaGeemen, Wierzba}. Our method gives a direct and geometric computation of the cone of numerically effective divisors of~$\Hilb 2 \Sigma$,
in accordance with the general results stated in the literature (see for instance in~\cite[Example~5.3]{DebarreMacri} and references therein), and it gives a concrete realization of the so-called ``second divisorial contraction''.

Consider the  generic situation where the N\'eron--Severi group of the K3 surface~$\Sigma$ is $\NS \Sigma = \IZ [H]$, with $H = \cO_\Sigma(1)$ of self-intersection $6$. We get:
\[
\NS {\Hilb 2 \Sigma}\cong \IZ h \oplus \IZ \delta,
\] 
where $[2\delta] = E$ and $h$ is the image of $[H]$ by the natural group homomorphism $\NS \Sigma \to \NS{\Hilb 2 \Sigma}$. For the Beauville--Bogomolov--Fujiki quadratic form on~$\NS{\Hilb 2 \Sigma}$ we have $\delta ^ 2 = -2$ and $h ^ 2 = 6$. 

\begin{lemma}
	The class of the divisor $\Psi$ in $\NS{\Hilb 2 \Sigma}$ is $[\Psi] = h - 2 \delta$.
\end{lemma}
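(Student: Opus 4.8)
The plan is to compute $[\Psi]$ by intersecting it with a basis of curve classes dual (for the Beauville--Bogomolov--Fujiki form) to $h$ and $\delta$, exploiting the geometric description of $\Psi$ obtained in the proof of Theorem~\ref{th:Fano_nodal_cubic}. Write $[\Psi] = \alpha h - \beta \delta$ with $\alpha, \beta \in \IZ$; we must show $\alpha = 1$ and $\beta = 2$. Two natural curve classes are available. First, the class $R$ of a fibre of the conic bundle $\Psi \to \Sigma$, i.e.\ a nonsingular plane conic $\varphi^{-1}([\ell])$ inside the exceptional divisor of the blowup $\Blow \Sigma {\Fano Y}$ (equivalently, using $\widetilde{\varphi^{-1}}$, a conic inside $\Psi \subset \Hilb 2 \Sigma$). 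Second, the class $L_x$ of the curve $\{x + y : y \in \Sigma\}$ for a fixed general point $x \in \Sigma$ (a line-in-Hilbert-square swept by moving the second point), which visibly meets $\Psi$ and the exceptional divisor $E = 2\delta$ in a controlled way.

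First I would fix the standard numerology on $\Hilb 2 \Sigma$: the fibre class $R_\delta$ of the Hilbert--Chow morphism $\Hilb 2 \Sigma \to \Sigma^{(2)}$ (a $\IP^1$ parametrizing length-two subschemes supported at a fixed point) satisfies $R_\delta \cdot \delta = -1$ and $R_\delta \cdot h = 0$, while the class $R_H$ of $\{x+y : y \in C\}$ for $C$ a curve in $|H|$ through a general fixed $x$ satisfies $R_H \cdot h = H^2 = 6$ and $R_H \cdot \delta = 1$ (one point where $y$ runs into $x$). These are the classical intersection formulas for $\Hilb 2$ of a K3 and I would just cite them. Pairing $[\Psi] = \alpha h - \beta\delta$ against $R_\delta$ gives $[\Psi]\cdot R_\delta = \beta$, so I need to show a Hilbert--Chow fibre $R_\delta$ meets $\Psi$ in exactly $2$ points. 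Geometrically, $R_\delta$ over a point $x \in \Sigma$ consists of all tangent directions $v \in T_x\Sigma = \IP^1$; by the description after Equation~\eqref{trident_divisor}, such $\{x,v\}$ lies in $\Psi$ exactly when the line $\ell_\xi \subset H_0$ it generates lies in $Q$, i.e.\ when $v$ is a direction along which $Q$ is tangent to order $\geq 2$ — equivalently $v$ lies in the intersection of $T_{\hat x}Q$ with the tangent cone of $Q$ at $\hat x$ along directions in $T_x\Sigma$. Concretely, in the local coordinates of Equation~\eqref{eq:residual_conic}/\eqref{eq:condition_residual}: the fibre of $\rho$ over $[\ell_0]$ is the conic $\{h_1(a)=q_1(a)=0\}$ and the Hilbert--Chow direction corresponds to varying $a$ inside this conic while keeping it on the $\IP^1$ of tangent directions; the points of $\Psi$ on $R_\delta$ are where this conic meets the relevant hyperplane. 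I expect this to come out to the intersection of a smooth conic with a line, namely $2$ points, giving $\beta = 2$.

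Then pairing against $R_H$ gives $6\alpha - \beta = [\Psi]\cdot R_H$, so with $\beta = 2$ I need $[\Psi]\cdot R_H = 6\alpha - 2$. The cleanest route is instead to use $\Psi$'s own fibre class $R = [\varphi^{-1}([\ell])]$, the conic over a point of $\Sigma$: since $\Psi \to \Sigma$ is a conic bundle, $R$ is a small contraction class, so $[\Psi] \cdot R = R^2_{\text{normal}}$ computes the degree of the normal bundle of the conic, and being the exceptional fibre of a crepant blowup of a transversal $A_1$-singularity along a surface, one has the standard value $[\Psi]\cdot R = -2$ (the $(-2)$-curve in the $A_1$ resolution). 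Simultaneously $R \cdot \delta$ and $R \cdot h$ can be read off: $R$ lies over a single point $x$ so its second-point locus is just $x$ deforming within a conic's worth of tangent/residual data — I would compute $R \cdot h$ as the degree in $\IP^4$ of the curve traced in $\Sigma$, which is $0$ since $R$ maps to the point $[\ell] \in \Sigma$, hence $R \cdot h = 0$ and then $[\Psi]\cdot R = -2$ forces, from $[\Psi] = \alpha h - \beta \delta$ and $R \cdot \delta = ?$, a relation pinning $\beta$; combined with the $R_\delta$ computation fixing $\beta = 2$ and an elementary check that $\Psi$ is not a multiple of $h$ (it dominates $\Sigma$ and is not the class $2h$ of a linear section, as one sees by the conic-bundle structure and a degree count), we get $\alpha = 1$.

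The main obstacle I anticipate is the bookkeeping of intersection numbers of $\Psi$ with the test curves: one must be careful that the curve classes $R_\delta$, $R_H$, $R$ are taken in $\Hilb 2 \Sigma$ (via $\varphi$) and transported correctly through the blowup $\Blow \Sigma {\Fano Y}$, where $\Psi$ corresponds to the exceptional divisor of $\rho$ by assertion~\eqref{i:nodal3}; the local equations \eqref{eq:equations_blow_fano_nodal} together with \eqref{eq:condition_residual} are exactly what makes the count "smooth conic meets hyperplane in $2$ points" rigorous, so the bulk of the work is translating those local computations into the two global intersection numbers $[\Psi]\cdot R_\delta = 2$ and $[\Psi]\cdot(\text{an }h\text{-positive test curve})$, after which linear algebra in the rank-two lattice $\NS{\Hilb 2 \Sigma}$ finishes it.
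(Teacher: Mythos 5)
Your overall strategy---pairing $[\Psi]$ against test curves in the rank-two lattice $\NS{\Hilb 2 \Sigma}$---is the same as the paper's, and your computation of the $\delta$-coefficient is essentially correct and matches the paper's: the Hilbert--Chow fibre $R_\delta=\IP(T_x\Sigma)$ satisfies $R_\delta\cdot h=0$, $R_\delta\cdot\delta=-1$, and the count of tangent directions $v$ for which the line $\ell_\xi$ lies in $Q$ is the intersection of a smooth conic with a line in $\IP(T_{\hat x}Q)$, i.e.\ two points (the paper obtains the same $2$ from the conditions $h_1(a)=h_2(a)=q_1(a)=0$). This correctly gives $\beta=2$.

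The determination of $\alpha$, however, has a genuine gap. Your ``cleanest route'' via the conic fibre $R$ of $\Psi\to\Sigma$ cannot determine $\alpha$: since $R$ is contracted to a point of $\Sigma$ you correctly have $R\cdot h=0$, but then $[\Psi]\cdot R=\alpha\,(h\cdot R)-\beta\,(\delta\cdot R)=-\beta\,(\delta\cdot R)$ contains no information about $\alpha$ at all; the identity $[\Psi]\cdot R=-2$ merely recovers $\delta\cdot R=1$ once $\beta=2$ is known. Likewise, observing that $\Psi$ is not a multiple of $h$ is already implied by $\beta=2$ and constrains nothing further. You do write down the correct missing relation, $[\Psi]\cdot R_H=6\alpha-\beta$, but you never compute its left-hand side, and that computation is the substantive half of the lemma. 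The paper carries it out by choosing $C\in|H|$ general and $x\in\Sigma\setminus C$, so that the test curve $\Gamma=\{x+a \,:\, a\in C\}$ satisfies $\Gamma\cdot\delta=0$ (cleaner than your choice of $C$ through $x$, which forces you to track $R_H\cdot\delta=1$ as well), and then showing via Equation~\eqref{eq:residual_conic} that $\Gamma\cap\Psi$ is cut out by $h_1(a)=q_1(a)=k_1(a)=0$, which on the relevant degree-six curve gives $\int_\Gamma[\Psi]=6$ and hence $\alpha=1$. Without this (or an equivalent) explicit count against an $h$-positive curve, the coefficient of $h$ is not established.
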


\begin{proof}
	Choose a general curve $C\in |H|$ and a point $x\in \Sigma \setminus C$ and consider the curve~$\Gamma$ of~$\Hilb 2 \Sigma$ defined by the subschemes $\xi = (x, a)$ where $a$ varies in $C$. Clearly $\int_\Gamma h = 6$ and since $\Gamma\cap E = \emptyset$ we have $\int_\Gamma \delta = 0$. We may assume that $x$ has coordinates $(0:1:0:\ldots:0)$ and that $C$ has equation $x_1 = 0$, hence $\Gamma \cap \Psi$ is the locus of subschemes $\xi = (x, a)$ such that $a\in C$ and $\Span(\xi, \node)$ consists in three lines through~$\node$. We repeat the computation done after Equation~\eqref{trident_divisor}: the line  $\Span(\node, a)$ has equation $t_1 = 0$ and is contained in the residual conic $C_a$, so looking at Equation~\eqref{eq:residual_conic} we get $q_1(a) = k_1(a) = 0$. The locus $\Gamma \cap \Psi$ is thus the set of points $a\in C$ such that $h_1(a) = q_1(a) = k_1(a) = 0$, so $\int_\Gamma [\Psi] =  6$.
	Consider now the curve $\Lambda \coloneqq\IP(T_x\Sigma)\subset \Hilb 2 \Sigma$ parametrizing nonreduced subschemes supported at the point~$x$. Clearly $\int_{\Lambda} h = 0$ and the formula of Ellingsrud--Str{\o}mme~\cite[Theorem~1.1]{ES} gives $\int_{\Lambda} \delta = -1$. We compute $\Lambda \cap \Psi$ as above: this time $\Span(\node, x)$ is the double line of equation $t_2 = 0$ so $h_1(a) = h_2(a) = 0$ and the residual line $\Span(\node, a)$ imposes $q_1 = 0$, so $\int_\Lambda [\Psi] = 2$. It follows that $[\Psi] = h - 2\delta$.
\end{proof}

The Hilbert--Chow morphism $\Hilb 2 \Sigma\to \Sigma^{(2)}$ contracts the divisor $E$, whose orthogonal class in $\NS{\Hilb 2 \Sigma}$ is the ray $\IZ h$. The divisorial contraction $\varphi\colon\Hilb 2 \Sigma \to \Fano Y$ contracts the divisor $\Psi$, whose orthogonal 
class in $\NS{\Hilb 2 \Sigma}$ is, by a straighforward computation, the ray $\IZ (2h - 3\delta)$.
The Nef cone of $\Hilb 2 \Sigma$ is thus generated by the classes $h$ and $2h - 3\delta$, as stated in \cite[Example~5.3]{DebarreMacri} following Bayer--Macr\`i~\cite{BM}. A different interpretation of this divisorial contraction is given in~\cite[Example~5.4]{KapustkaGeemen}.

\section{The Fano variety of lines of a cuspidal cyclic cubic fourfold}

\label{section_fano_variety_lines_cuspidal_cubic_fourfold}

Let $C \subset \IP^4$ be a cubic threefold with one single ordinary double point $\node$.
In a suitable  homogeneous coordinate system $(x_0 : \ldots : x_4)$ of $\IP^4$, the equation of~$C$ may be written as:
\begin{align} 
	\label{eq:nodal_threefold}
	\eqC(x_0, \ldots, x_4) = x_0 q(x_1, \ldots,x_4) + g(x_1, \ldots, x_4),
\end{align}
where $q$ is a nondegenerate quadratic form and $g$ is a cubic form. The node~$\node$ has coordinates $[1:0:\ldots:0]$.
Consider the cubic fourfold $Y \subset \IP^5$ defined as the triple covering of $\IP^4$ branched along $C$ 
and denote by $\iota \in \Aut Y$ the covering automorphism. Using the same convention as above, the equation of~$Y$ may be written as:
\begin{align} 
	\label{eq:cuspidal_fourfold}
	\eqY(x_0, \ldots, x_5) = x_0 q(x_1, \ldots,x_4) + g(x_1, \ldots, x_4) + x_5^3,
\end{align}
and $\iota(x_0:\ldots:x_5) = (x_0:\ldots:x_4:\xi x_5)$, where $\xi$ is a primitive third root of unity.
The node $\node$ of $C$ induces an isolated singular point $\cusp$ of type $A_2$ on $Y$. We call
$Y$ a \emph{cyclic cuspidal cubic fourfold}.

We denote $k(x_0,\ldots,x_5) \coloneqq g(x_0, \ldots, x_4) + x_5^3$. The zero loci of~$q$ and~$k$ define in the hyperplane~$H_0 \coloneqq \{x_0 = 0\} \subset \IP^5$ respectively a quadratic cone of dimension three~$Q$ and a cubic threefold~$K$. Note that the vertex of $Q$ is not in $K$.

Any line $\ell \subset Y$ passing through~$\cusp$ cuts the hyperplane~$H_0$ at a point of~$Q\cap K$. Since $Y$~has no other singularity than~$\cusp$, the intersection~$Q\cap K$ is nonsingular. We identify the locus $\Sigma \subset \Fano Y$ of those lines through~$\cusp$ with the K3 surface~$Q\cap K$.
As in the nodal case, the cubic fourfold $Y$ contains a plane passing through $\cusp$ if and only if the K3 surface $\Sigma$ contains a line and the number of such lines is at most~$42$ (see \cite[Theorem~1.2]{Degtyarev}).

The homography $\iota$ of $\IP^5$ acts naturally of~$\Grass 1 5$ and restricts to an order three automorphism $\sigma$ of $\Fano Y$ which is nonsymplectic~\cite[Lemma~6.2]{BCSclass}. Similarly, $\iota$~produces a nonsymplectic order three automorphism~$\tau$ on~$\Sigma$ which is simply the restriction of the homography $\iota (x_1 :\ldots : x_5) = (x_1:\ldots: x_4 : \xi x_5)$ to the hyperplane~$H_0$. It induces a natural order three nonsymplectic automorphism~$\Hilb 2 \tau$ on~$\Hilb 2 \Sigma$.

\subsection{Symplectic resolution of $\Fano Y$}

If $Y$ has an equation as \eqref{eq:nodal_fourfold}, but where the quadratic form has rank four, we may assume that it does not depend on the variable $x_5$, so the point $\node$ defines a cusp in $Y$. In this setup, assuming that the K3 surface~$\Sigma$ is nonsingular, analogous statements as in Theorem~\ref{th:Fano_nodal_cubic} can be proven using the same lines, with the following changes: in assertion \eqref{i:nodal2} the variety~$\Fano Y$ has transversal $A_2$-singularities along~$\Sigma$ and in assertion~\eqref{i:nodal3} the fibres of the divisor~$\Psi$ over a point of~$\Sigma$ are the union of two rational curves intersecting transversally. We will state this result only in the more restrictive setup which is of interest for us in this note, where $Y$~is obtained as a cyclic covering, since we are mostly interested in the behaviour of the symmetries induced by the covering automorphism.

\begin{theorem} Let $Y$ be a cuspidal cyclic cubic fourfold with one cusp $\cusp$.
	\label{th:Fano_cuspidal_cyclic_cubic}
	\begin{enumerate}
		\item  \label{i:cusp1} There exists a birational map $\varphi \colon \Hilb 2 \Sigma \dashrightarrow \Fano Y$ which commutes with the actions of $\Hilb 2 \tau$ and $\sigma$. If $Y$ contains no plane through $\cusp$, then $\varphi$ is everywhere defined and it contracts a divisor~$\Psi$ to the surface~$\Sigma$.
		
		\item  \label{i:cusp2} The variety $\Fano Y$ has symplectic singularities, they are $A_2$-transversal along the surface $\Sigma$ and $\Blow \Sigma {\Fano Y}$ is a symplectic resolution of $\Fano Y$. 
		
		\item \label{i:cusp3} Assume that $Y$ contains no plane through $\cusp$. The blowup $\rho$ of $\Sigma$ in~$\Fano Y$ is an elimination of the indeterminacies of the rational map $\varphi^{-1}$:
		\[
		\xymatrix{
			\Blow \Sigma {\Fano Y} \ar[d]^\rho \ar[dr]^{\widetilde{\varphi^{-1}}}  & \\
			\Fano Y \ar@{-->}[r]^-{\varphi^{-1}}                      & \Hilb 2 \Sigma
		}
		\]
		The morphism $\widetilde{\varphi^{-1}}$  is an isomorphism which maps isomorphically the exceptional divisor of the blowup to the divisor $\Psi$.
		
		\item \label{i:cusp4} The automorphism $\sigma$ goes up to an order three nonsymplectic automorphism~$\widetilde\sigma$ on~$\Blow \Sigma {\Fano Y}$. If $Y$~contains no plane through~$\cusp$,
		the isomorphism~$\widetilde{\varphi^{-1}}$ commutes with the actions of $\widetilde\sigma$ and $\Hilb 2 \tau$.
	\end{enumerate}
\end{theorem}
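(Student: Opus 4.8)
The plan is to mimic the proof of Theorem~\ref{th:Fano_nodal_cubic} step by step, treating the cyclic cuspidal case as a deformation of the nodal computation where the quadratic form $q$ has rank four instead of rank five, and where the extra coordinate $x_5$ carries the group action. The key structural observation is that \emph{everything in sight is $\iota$-equivariant}: the section $s_\eqY$ of $\Sym 3 {\cS^\ast}$ is invariant under the induced action of $\iota$ on the Grassmannian, hence $\Fano Y$, its singular locus $\Sigma$, the blowup $\Blow\Sigma{\Fano Y}$ and all the birational maps we construct are automatically compatible with the automorphisms. So the new content beyond Theorem~\ref{th:Fano_nodal_cubic} is (a) checking the analytic type of the transversal singularity is $A_2$ rather than $A_1$, (b) describing the fibres of $\Psi\to\Sigma$, and (c) verifying the equivariance assertions and that $\sigma$ lifts to $\Blow\Sigma{\Fano Y}$.

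\textbf{Step 1: the birational map and its equivariance (assertion~\eqref{i:cusp1}).} The construction of $\varphi\colon \Hilb 2 \Sigma\dashrightarrow \Fano Y$ is verbatim the one in the proof of Theorem~\ref{th:Fano_nodal_cubic}: given $\xi\in\Hilb 2\Sigma$, form the plane $P=\Span(\cusp,\xi)$ and take the residual line of $P\cap Y$. Since $\cusp$ is $\iota$-fixed and $\iota$ preserves $\Sigma$, the plane $\Span(\cusp,\Hilb 2\tau(\xi))$ equals $\iota(P)$, and the residual line of $\iota(P)\cap Y$ is $\iota$ applied to the residual line of $P\cap Y$; this gives $\varphi\circ\Hilb 2\tau=\sigma\circ\varphi$ wherever $\varphi$ is defined. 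The same local computation with equations~\eqref{eq:nodal_fourfold_precise} (now with $q$ of rank four not depending on $x_5$, plus the term $x_5^3$) shows that for a nonreduced $\xi=\{x,v\}$ the line $\ell_0=\Span(\cusp,x)$ is a double line of $P\cap Y$, so $\varphi$ is defined there too; and the inverse $\varphi^{-1}\colon \Fano Y\dashrightarrow\Hilb 2\Sigma$ is built exactly as in the three-case analysis using the quadric cone $\widehat Q$. When $Y$ contains no plane through $\cusp$, the "trident" divisor $\Psi$ of subschemes $\xi$ with $\ell_\xi\subset Q$ is contracted to $\Sigma$ by $\varphi$, just as before — note $Q$ is now a three-dimensional quadric cone of rank four, but the argument that $\ell_\xi$ meets $\Sigma$ in three points iff it lies on $Q$ is unchanged.

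\textbf{Step 2: symplectic singularities and the transversal type (assertion~\eqref{i:cusp2}).} The argument that $\Fano Y$ has symplectic singularities is identical: $K_{\Fano Y}=0$ because $\varphi$ is an isomorphism in codimension $\le 1$ onto an open set of $\Hilb 2\Sigma$ (the locus $Z$ of lines in planes through $\cusp$ still has codimension three since $Y$ has no plane), the resolution argument via \cite[III.6.1]{Shafarevich} gives canonical hence rational singularities by Elkik--Flenner, the symplectic form extends across the codimension-three locus by reflexivity of $\Omega^2$, stays closed by Kebekus--Schnell, and Namikawa's criterion applies. Then Kaledin's stratification theorem gives transversal ADE singularities along $\Sigma$. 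To identify the type, I would redo the Plücker-chart computation of the proof of assertion~\eqref{i:nodal2}: now the equation~\eqref{eq:nodal_fourfold} reads $x_0q(x_1,\dots,x_4)+g(x_1,\dots,x_4)+x_5^3$, so in the decomposition~\eqref{eq:nodal_fourfold_precise} one has $q=x_1h_1+q_1$ with $h_1,q_1$ depending only on $x_2,\dots,x_4$, and $k=x_1^2h_2+x_1q_2+k_1$ with now $k_1=g_1(x_2,x_3,x_4)+x_5^3$. The Jacobian at $[\ell_0]$ has the same shape, and after using the implicit function theorem to eliminate two $p_0$-variables, the transversal surface $\cX_0\subset\IC^4$ is cut by $\overline\psi_{3,0}=q_1(p_1)-k_1(p_1)$ and $\overline\psi_{2,1}=-h_1(p_1)+q_2(p_1)$. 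The point is that $h_1$ depends only on the three variables among $x_2,\dots,x_4$ and the presence of $x_5^3$ (a cube, not a square) in $k_1$ forces the restriction of $q_1-k_1$ to $\ker(h_1)$ to have a singularity of type $A_2$ rather than $A_1$ — this is the one genuinely new local computation, and I expect it to be \textbf{the main obstacle}: one must carefully track how the rank-four degeneration of $q$ together with the cubic term $x_5^3$ combine, presumably after a coordinate change, into the normal form $uv=w^3$. Given the $A_2$ identification, $\Blow\Sigma{\Fano Y}$ is a crepant, hence symplectic, resolution by Perroni and \cite[Proposition~1.6]{Fu}.

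\textbf{Step 3: the blowup resolves $\varphi^{-1}$ and $\sigma$ lifts (assertions~\eqref{i:cusp3},~\eqref{i:cusp4}).} The local equations~\eqref{eq:equations_blow_fano_nodal} of $\Blow\Sigma{\Fano Y}$ are obtained by the same substitution, and the fibre of $\rho$ over $[\ell_0]$ becomes the plane curve $\{q_1(a)=h_1(a)=0\}$ in $\IP^3$; with $q$ of rank four this conic now degenerates into two lines meeting transversally (rather than a smooth conic), matching the announced description of the fibres of $\Psi$. As in the nodal case this shows $\rho$ eliminates the indeterminacy of $\varphi^{-1}$, yielding a birational bijective morphism $\widetilde{\varphi^{-1}}$, hence an isomorphism by Zariski's main theorem, mapping the exceptional divisor onto $\Psi$. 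Finally, since $\sigma\in\Aut{\Fano Y}$ preserves $\Sigma$, it lifts canonically to an automorphism $\widetilde\sigma$ of the blowup $\Blow\Sigma{\Fano Y}$, of order three; it is nonsymplectic because its action on the (unique up to scalar) holomorphic two-form is the pullback of the nonsymplectic action of $\sigma$ on $\Fano Y^\reg$. The commutation $\widetilde{\varphi^{-1}}\circ\widetilde\sigma=\Hilb 2\tau\circ\widetilde{\varphi^{-1}}$ then follows from the commutation $\varphi\circ\Hilb 2\tau=\sigma\circ\varphi$ of Step~1 together with uniqueness of the resolution of indeterminacies, since both $\widetilde{\varphi^{-1}}$ and $\Hilb 2\tau^{-1}\circ\widetilde{\varphi^{-1}}\circ\widetilde\sigma$ are morphisms $\Blow\Sigma{\Fano Y}\to\Hilb 2\Sigma$ agreeing on the dense open set where $\rho$ is an isomorphism.
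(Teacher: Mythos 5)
Your plan is the paper's plan: the authors likewise prove Theorem~\ref{th:Fano_cuspidal_cyclic_cubic} by rerunning the proof of Theorem~\ref{th:Fano_nodal_cubic} and only recording the changes, namely the modified local equations, the new fibre description of $\Psi\to\Sigma$, the $A_2$ identification, and the equivariance statements. Your Step~1 and Step~3 match theirs (your derivation of the equivariance of $\widetilde{\varphi^{-1}}$ from density of the locus where $\rho$ is an isomorphism is a clean alternative to the paper's explicit check that $\iota(P_a)=P_{\widetilde\sigma(a)}$ on the exceptional fibres, and is fine).

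The one genuine gap is exactly the step you flag as ``the main obstacle'' and then do not carry out: the identification of the transversal singularity as $A_2$. The paper completes it as follows. Normalizing $h_1=x_2$, the equation $\overline\psi_{2,1}=-h_1+q_2=0$ expresses $p_{1,2}$ as a holomorphic function of $(p_{1,3},p_{1,4})$ with vanishing linear part; substituting into $\overline\psi_{3,0}$, the lowest-order terms of the resulting equation in $(p_{1,3},p_{1,4},p_{1,5})$ are the quadratic form $q_1(0,p_{1,3},p_{1,4})$ plus the cubic $p_{1,5}^3$ (note $q_1$ involves only $x_2,x_3,x_4$, so $p_{1,5}$ genuinely enters only through the cube). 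The whole argument therefore hinges on showing that $q_1\vert_{\ker h_1}$ has rank exactly \emph{two}: rank two gives $uv=w^3$, i.e.\ $A_2$, whereas rank one or zero would give a worse (non-$A_2$) singularity. You assert in Step~3 that the conic $\{h_1=q_1=0\}$ ``degenerates into two lines meeting transversally,'' but this is precisely what must be proved, and it is not automatic from $q$ having rank four. The paper proves it by showing the hyperplane $\{h_1=0\}$ is not tangent to the quadric cone $\{q_1=0\}$: otherwise there would exist $b$ with $h_1(b)=q_1(b)=0$ and $\nabla q_1(b)=\alpha\nabla h_1$ with $\alpha\neq 0$, forcing $\nabla q(-\alpha,b,0)=0$, which contradicts the smoothness of $Q$ away from its vertex $(0:\dots:0:1)$ (equivalently, the nondegeneracy of $q$ as a form in $x_1,\dots,x_4$). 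This same non-tangency is also what justifies your claim in Step~1 that the fibres of $\Psi\to\Sigma$ are two \emph{distinct} lines. Supplying this verification closes the gap; everything else in your proposal is in order.
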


\begin{proof}
	
	The proof follows the same lines as those of Theorem~\ref{th:Fano_nodal_cubic} : we thus only write the main changes. 
	
	\medskip
	\noindent\textit{In the proof of assertion~\eqref{i:cusp1},}
	Equation~\eqref{eq:nodal_fourfold_precise} becomes:
	\begin{equation}
		\label{eq:cupsidal_fourfold_precise}
		\begin{aligned}
			q &= x_1 h_1(x_2, x_3, x_4) + q_1(x_2, x_3, x_4), \\
			k &= x_1 ^  2 h_2(x_2, x_3, x_4) + x_1 q_2(x_2, x_3, x_4) + k_1(x_2, x_3, x_4) + x_5 ^ 3
		\end{aligned}
	\end{equation}
	and Equation~\eqref{eq:residual_conic}	becomes:
	\begin{align}
		\label{eq:residual_conic_cusp}
		h_1(a) t_0 t_1  + q_1(a) t_0 t_2  + h_2(a) t_1 ^ 2  + q_2(a)  t_1 t_2 + k_1(a) t_2 ^ 2 + a_5 ^ 3 t_2 ^ 2.
	\end{align}
	Equation~\eqref{eq:condition_residual} defines this time two lines meeting at $(0:0:0:1) \in \IP^3$. Let us check that the plane $\{h_1(a) = 0\}$ is not tangent to the quadratic cone $\{q_1(a) = 0\}$. If this occurs, then there exists a point $b\coloneqq (b_2:b_3:b_4)\in \IP^2$ such that $q_1(b) = h_1(b) = 0$ and $\nabla q_1(b) = \alpha \nabla h_1$ with $\alpha \neq 0$. Then $\nabla q(-\alpha, b, 0) = 0$ this is impossible since $Q$ nonsingular away from the point $(0:\ldots:0: 1) \in H_0$.
	We conclude that the fibres of $\varphi$ are two distinct meeting lines whose points~$a$ parametrize those planes $P_a$ such that $P_a\cap Y$ is the union of three non-necessarily distinct lines though~$\cusp$. Note that at the point $a = (0:\ldots:0 :1)$, the residual conic $C_a$ has equation $t_2^2 = 0$:  the line~$\ell$ is a triple line with tritangent plane $P_a$.
	
	\medskip
	\noindent\textit{In the proof of assertion~\eqref{i:cusp2},} To determine the nature of the singularity, after a linear change of variables in the coordinates $x_2, x_3, x_4$ only, we see that the fibre~$\cX_0$ is given by the two equations:
	\begin{align*}
		\overline\psi_{3, 0}(p_1) &= q_1(p_{1, 2}, p_{1, 3}, p_{1, 4})  + p_{1, 5} ^ 3 - k_1(p_{1, 2}, p_{1, 3}, p_{1, 4}),\\
		\overline\psi_{2, 1}(p_1) &=  - h_1(p_{1, 2}, p_{1, 3}, p_{1, 4}) + q_2(p_{1, 2}, p_{1, 3}, p_{1, 4}).
	\end{align*}
	Since $h_1(x_2, x_3, x_4) = x_2$, using the second equation we may express $p_{1, 2}$ locally at the origin as a holomorphic function $\tilde p_{1, 2}$ in the variables $p_{1, 3}, p_{1, 4}$ and we observe that its power expansion contains no linear term.  Replacing in the first equation, we see that the quadratic term is then equal to $q_1(0, p_{1, 3}, p_{1, 4})$. This means that the surface singularity in $\IC ^ 3$ of coordinates $(p_{1, 3}, p_{1, 4}, p_{1, 5})$ starts with a quadratic term  in the variables $p_{1, 3}, p_{1, 4}$ and a cubic term in $p_{1, 5}$. This quadratic term is
	nothing else than the intersection of the quadradic cone $q_1(x_2, x_3, x_4) = 0$ with the hyperplane $h_1(x_2, x_3, x_4) = 0$. We proved during the proof of assertion~\eqref{i:cusp1} that this intersection is the union of two distinct lines, so it is a quadratic form of rank two: this shows that $\cX_0$ has an $A_2$ singularity at the origin.
	
	\medskip
	\noindent\textit{Proof of assertion~\eqref{i:cusp4}.}
	By construction, $\sigma([\ell]) = [\iota(\ell)]$ for any $[\ell] \in \Grass 1 5$ and $\tau(x) = \iota(x)$ for any $x\in \Sigma$. For any $\xi \in \Hilb 2 \Sigma$, the image by $\iota$ of the plane $P = \Span(\cusp, \xi)$ 
	is $\iota(P) = \Span(\cusp, \tau^{[2]}(\xi))$ so $\varphi(\tau^{[2]}(\xi)) = \sigma(\varphi(\xi))$. This shows that $\varphi$~commutes with the actions induced by~$\iota$. Since $\tau^{[2]}$ acts nonsymplectically on~$\Hilb 2 \Sigma$ and since we defined the symplectic form on the smooth locus of $\Fano Y$ by pulling back of those of $\Hilb 2 \Sigma$, we deduce that $\sigma$ acts nonsymplectically on $\Fano Y \setminus \Sigma$. In our local Pl\"ucker coordinates, $\sigma$ acts by $p_{j,k}  \mapsto p_{j,k}$ and $p_{j, 5} \mapsto \xi p_{j, 5}$ for $k=2, 3, 4$ and $j = 0, 1$. The blowup relations $p_{1, i} = \frac{a_i}{a_5} p_{1, 5}$ impose that the automorphism~$\widetilde \sigma$ on~$\Blow \Sigma {\Fano Y}$ that makes~$\rho$ equivariant is defined by putting $a_5\mapsto \xi a_5$. Since $\rho$~is an equivariant symplectic resolution, 
	$\widetilde \sigma$~acts nonsymplectically. For any point $a \in \rho^{-1}{[\ell_0]}$ in an exceptional fiber, since $\widetilde\sigma$ sends $a_5$ to $\xi a_5$, we have $\iota(P_a) = P_{\widetilde\sigma (a)}$ so  $\widetilde\varphi^{-1}$~is equivariant on the exceptional locus.
\end{proof}

\begin{corollary}
	Under the assumptions of Theorem~\ref{th:Fano_cuspidal_cyclic_cubic}, if $Y$ contains no plane through $\cusp$, then generically:
	\[
	\Pic {\Blow \Sigma {\Fano Y}} \cong \Pic{ \Hilb 2 \Sigma} \cong \langle 6\rangle \oplus A_2(-1).
	\]
	The divisor $\Psi$ has two reducible components that generate the factor $A_2(-1)$.
\end{corollary}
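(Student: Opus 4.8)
The plan is to reduce the statement to an elementary lattice computation on $\Hilb 2 \Sigma$. By assertion~\eqref{i:cusp3} the morphism $\widetilde{\varphi^{-1}}$ is an isomorphism $\Blow \Sigma {\Fano Y}\to\Hilb 2 \Sigma$, which canonically identifies the two Picard groups and carries the exceptional divisor of~$\rho$ to~$\Psi$. This already yields the first isomorphism of the statement, and reduces the rest to computing $\Pic{\Hilb 2 \Sigma}$ for a generic~$Y$ together with the classes of the irreducible components of~$\Psi$.

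First I would pin down $\NS\Sigma$. Projecting $\Sigma=Q\cap K$ from the vertex of the quadratic cone~$Q$ exhibits $\Sigma$ as a cyclic triple cover $\pi\colon\Sigma\to Q'$ of the smooth quadric $Q'\cong\IP^1\times\IP^1$, branched along a curve~$B$ of bidegree $(3,3)$, with~$\tau$ acting as the deck transformation and trivially on~$Q'$. Writing $F_1,F_2$ for the pullbacks of the two rulings, one has $F_1^2=F_2^2=0$, $F_1\cdot F_2=3$ and $h=\cO_\Sigma(1)=F_1+F_2$, so $\langle F_1,F_2\rangle\cong U(3)\subseteq\NS\Sigma$; for~$B$ general (equivalently, for generic~$Y$ in the $9$-dimensional family of cuspidal cyclic cubics) a Noether--Lefschetz argument, or a citation to~\cite{BCScubic} where it follows from surjectivity of the period map onto a ball quotient, gives equality $\NS\Sigma=U(3)$. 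Adjoining the class~$\delta$ with $\delta^2=-2$, $\delta\perp\NS\Sigma$ and $E=2\delta$ then yields $\Pic{\Hilb 2 \Sigma}=U(3)\oplus\langle-2\rangle$, a lattice on which $\Hilb 2 \tau$ acts trivially, so that this is indeed the generic Picard lattice. The explicit change of basis $g=2h-3\delta$, $u_1=F_1-\delta$, $u_2=F_2-\delta$, of determinant $\pm1$, transforms the intersection matrix into $\langle6\rangle\oplus A_2(-1)$, since $g^2=6$, $u_i^2=-2$, $u_1\cdot u_2=F_1\cdot F_2+\delta^2=1$ and $g\cdot u_i=0$.

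It then remains to recognize $\langle u_1,u_2\rangle\cong A_2(-1)$ as generated by the components of~$\Psi$. By assertion~\eqref{i:cusp2} the singularities of $\Fano Y$ are transversal $A_2$ along~$\Sigma$, so over each point of~$\Sigma$ the exceptional divisor of~$\rho$ is an $A_2$-configuration of two rational curves meeting transversally, as already recorded in the proof. Globally these two curves are separated by the two rulings of the cone~$Q$: a line of~$Y$ parametrized by a point of~$\Psi$ lies on~$Q$, hence inside the cone over a ruling line of~$Q'$, and the two families so produced meet only along the locus of tritangent planes; thus $\Psi=\Psi_1\cup\Psi_2$, where each $\Psi_i\to\Sigma$ is a smooth $\IP^1$-fibration, so $\Psi_i$ is smooth and connected, hence irreducible, and $\Psi_1\cap\Psi_2$ is the section over~$\Sigma$ of tritangent planes. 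Computing the intersection numbers of $\Psi_1,\Psi_2$ against three independent test curves — the curve~$\Gamma$ of subschemes $(x,a)$ with~$a$ running through a general hyperplane section, the analogous curve built from a fibre of one of the two elliptic fibrations of~$\Sigma$ induced by the rulings, and the curve $\Lambda=\IP(T_x\Sigma)$, using $\int_\Lambda\delta=-1$ from Ellingsrud--Str{\o}mme~\cite{ES} — one finds, up to relabeling, $[\Psi_1]=F_1-\delta$ and $[\Psi_2]=F_2-\delta$; in particular $[\Psi_1]+[\Psi_2]=h-2\delta$ recovers the class of the nodal-type divisor~$\Psi$. Finally the pulled-back Plücker polarization has square~$6$ and is orthogonal to both~$\Psi_i$, since~$\varphi$ contracts them, hence equals~$g$ up to sign; therefore $\Pic{\Hilb 2 \Sigma}=\IZ g\oplus\langle[\Psi_1],[\Psi_2]\rangle\cong\langle6\rangle\oplus A_2(-1)$, with the last summand generated by the two components of~$\Psi$. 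The statement for $\Pic{\Blow \Sigma {\Fano Y}}$ is then immediate from the isomorphism of the first paragraph.

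The only non-elementary ingredient, and the main obstacle, is the genericity statement $\NS\Sigma=U(3)$: this is a Noether--Lefschetz-type fact, either quoted from~\cite{BCScubic} or proved by a density/monodromy argument on the family of bidegree $(3,3)$ curves on $\IP^1\times\IP^1$. Everything else consists of the explicit finite intersection computations above, which run in parallel with the nodal case treated in the proof of Theorem~\ref{th:Fano_nodal_cubic}.
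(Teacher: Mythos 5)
Your proof is correct, but it takes a genuinely different and much more explicit route than the paper's. The paper disposes of the corollary in three lines: the Picard lattice is quoted from~\cite[\S 4.3]{BCScubic}, where $\langle 6\rangle \oplus A_2(-1)$ arises as the invariant lattice $U(3)\oplus\langle -2\rangle$ of the degenerate automorphism on $\Hilb 2 \Sigma$, and the fact that $\Psi$ has exactly two irreducible components is deduced purely numerically: since $\Pic{\Fano Y}\cong\langle 6\rangle$ has rank one and $\Pic{\Hilb 2 \Sigma}$ has rank three, the contraction $\varphi$ has relative Picard rank two. You instead reconstruct the lattice from the triple-cover structure $\Sigma\to\IP^1\times\IP^1$ (which is indeed the correct geometry: the vertex of the cone $Q$ lies off $K$, the deck transformation is $\tau$, and the branch curve has bidegree $(3,3)$), exhibit the two components of $\Psi$ geometrically as the closures of the loci where the line $\ell_\xi\subset Q$ projects to one or the other ruling of the base quadric, and pin down their classes $F_i-\delta$ by test curves; your base change $g=2h-3\delta$, $u_i=F_i-\delta$ has determinant one and correctly carries $U(3)\oplus\langle -2\rangle$ to $\langle 6\rangle\oplus A_2(-1)$. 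What your approach buys is an identification of the actual effective classes spanning the $A_2(-1)$ factor and a transparent reason why no monodromy interchanges the two components (the two rulings of $\IP^1\times\IP^1$ are globally distinguished), neither of which the paper makes explicit; what it costs is a handful of intersection-number verifications that you only sketch (they do check out: for instance $\int_\Gamma(F_i-\delta)=3$ and $\int_\Lambda(F_i-\delta)=1$ match the geometric counts), and it still rests on the same single non-elementary input, namely the generic equality $\NS\Sigma=U(3)$, which is exactly what the citation of~\cite{BCScubic} supplies. Both arguments are sound; yours is self-contained where the paper's is a deduction from rank bookkeeping plus an external reference.
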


\begin{proof}
	The computation of the Picard groups is a direct consequence of Theorem~\ref{th:Fano_cuspidal_cyclic_cubic},
	using the results of~\cite[\S4.3]{BCScubic}. Since $\Pic{\Fano Y} \cong \langle 6\rangle$, the morphism~$\varphi$ has relative Picard rank~$2$, hence $\Psi$~has two irreducible components. Note that the  $A_2$-contraction~$\varphi$ is in agreement with~\cite[Proposition~6.10, Corollary~6.11]{BakkerLehn_global}.
\end{proof}

\begin{remark}\label{rem:fix_locus}
	The fix locus of $\tau$ on $\Sigma$ is the genus four curve $\cC_4$ given by the equation $x_5 = 0$. The fix locus of $\Hilb 2 \tau$ on $\Hilb 2 \Sigma$ is thus the surface defined by the Hilbert square $\Hilb 2 \cC_4$, which is nothing else than the nonsingular Chow quotient $\cC_4^{(2)}$ (see for instance~\cite[\S6.1]{BCSclass}). On the other side, the fix locus of~$\iota$ on $Y$ is its cusp $\cusp$ and the cubic threefold $C$. The fix locus of~$\sigma$ on~$\Fano Y$ is thus the surface~$\Fano C$, which is a nonnormal surface singular along the lines contained in $C$ and passing through its node: similarly as above, this locus is isomorphic to $\cC_4$ and we have $\Fano C \cap \Sigma = \cC_4$. 
	Using the same computations as above, it is easy to see that $\Fano C \setminus \cC_4$ is isomorphic to the open subset of $\cC_4^{(2)}$ obtained by removing its intersection with the divisor $\Psi$ defined in Equation~\eqref{trident_divisor}.
	The equivariant diagram proven in Theorem~\ref{th:Fano_cuspidal_cyclic_cubic} shows that, when~$Y$ contains no plane through is cusp, the fix locus of $\tilde \sigma$ on $\BL_\Sigma {\Fano Y}$ is the strict transform $\widetilde{\Fano C}$ of $\Fano C$ by $\rho$ and the equivariant isomorphism $\widetilde{\varphi^{-1}}$ induces
	an isomorphism $\widetilde{\Fano C} \cong \cC_4^{(2)}$.
\end{remark}

\subsection{Nodal degenerations of cubic threefolds and nonseparated limits}

\label{ss:degeneration}

Let us give an interpretation of Theorem~\ref{th:Fano_cuspidal_cyclic_cubic} in the context of nodal generations of cubic threefolds as studied in~\cite[\S4]{BCScubic}. We recall briefly the context, refering to the paper for more details. The Fano variety of lines of a smooth cubic fourfold is a $6$-polarized 
IHS manifold deformation equivalent to the Hilbert square of a K3 surface (see~\cite{BeauvilleDonagi}).  The second cohomology space of these varieties, equipped with the Beauville--Bogomolov--Fujiki bilinear form, is isometric to the lattice:
\[
L \coloneqq U^{\oplus 3} \oplus E_8(-1)^{\oplus 2} \oplus \langle -2\rangle.
\]
We denote by $\cM_{L}^\circ$ a connected component of the moduli space of marked IHS manifolds belonging to this deformation class, with its surjective period map $\cP_L\colon \cM_L^\circ \to \Omega_L$ as in~\cite[Theorem~8.1]{Huybrechts}.
Consider a one parameter family $\{C_t\}_{t\neq 0}$ of smooth cubic threefolds in $\IP^4$, degenerating to a nodal cubic $C_0$. We consider the family of cubic fourfolds $Y_t$ obtained by triple covering of $\IP^4$ branched over $C_t$ and the associated family of Fano variety of lines $\Fano {Y_t}$, equipped with their order three nonsymplectic automorphism $\sigma_t$ induced by the covering automorphism. If $t\neq 0$, the pairs $(\Fano {Y_t}, \sigma_t)$ live in a moduli space $\cM_{6}^{\rho, \xi}$, where $\rho\in \orth(L)$ is the isometry class defined by $\sigma_t^\ast$ up to the choice of a marking and $\xi$ is the primitive third root of unity such that ${\sigma_t}\left|_{\HH^{2, 0}(\Fano{Y_t})}\right. = \xi \id$ 
(we refer to~\cite[\S3.5]{BCScubic} for the details on the adequate lattice-theoretical choices needed to fix the algebraic data). The restriction of the period map $\cP_L$ gives a surjective period map $\cP_6^{\rho, \xi} \colon \cM_6^{\rho, \xi}\to \Omega_6^{\rho, \xi}\setminus\cH$, where $\Omega_6^{\rho, \xi}$ is isomorphic to a $10$-dimensional complex ball and $\cH$ is a hyperplane arrangement. We have a commutative diagram:
\[
\xymatrix{
	\cM_6^{\rho, \xi}\ar[d]^{\cP_6^{\rho, \xi}} \ar@{^(->}[r]& \cM_L^\circ \ar@{->>}[d]^{\cP_L}\\
	\Omega_6^{\rho, \xi}\ar@{^(->}[r]& \Omega_L 
}
\]

We assume as above that the cubic $C_0$ has one single ordinary double point, so that the fourfold $Y_0$ has one single cusp. The locus of lines through the cusp defines a smooth K3 surface $\Sigma\subset \Fano {Y_0}$ with an order three nonsymplectic automorphism~$\tau$ induced by the covering automorphism.
The limit of the period points of the pairs $(\Fano{Y_t}, \sigma_t)$, when $t$ goes to zero, belongs to~$\cH$ (see~\cite[\S4.1]{BCScubic}):
\[
\lim\limits_{t\to 0} \cP_6^{\rho, \xi}(\Fano{Y_t}, \sigma_t) = \omega_0\in \cH.
\]

The proof of~\cite[Proposition~4.6]{BCScubic} shows that the choice of the IHS manifold~$\Sigma^{[2]}$ in $\cM_L^\circ$ over the period point~$\omega_0$, equipped with the automorphism~$\tau^{[2]}$, permits to extend holomorphically the period map $\cP_6^{\rho, \xi}$ over the hyperplane parametrizing the nodal degenerations. For this, it is necessary to change the representation $\rho$ as in~\cite[Proposition~4.4]{BCScubic} since the invariant lattice of the automorphism, that is $\langle 6\rangle$ when $t\neq 0$, becomes after this degeneration:
\[
U(3)\oplus \langle -2\rangle \cong \langle 6\rangle \oplus A_2(-1)
\]
(see~\cite[Lemma~4.3]{BCScubic}). The choice of the pair $\left(\Sigma^{[2]}, \tau^{[2]}\right)$ was motivated by Hodge theoretic arguments, based on the work of Allcock--Carlson--Toledo, but it has the little drawback that it hides the geometry of the singularity behind the abstract isometry of lattices above. Theorem~\ref{th:Fano_cuspidal_cyclic_cubic} shows that the blowup of $\Fano {Y_0}$ along $\Sigma$, equipped with the automorphism $\tilde \sigma$, is a more intuitive choice since $\Fano{Y_0}$ has a transversal $A_2$-singularity along $\Sigma$. If $Y_0$ contains no plane through its cusp, both choices are equivariantly isomorphic so the change is only for esthetic reasons. If $Y_0$~contains some planes through its cusp, the K3 surface $\Sigma$ contains some lines and the pairs $\left(\BL_\Sigma\Fano {Y_0}, \tilde\sigma\right)$ and 
$\left(\Sigma^{[2]}, \tau^{[2]}\right)$ are equivariantly birational to each other, so they are nonseparated points for the topology of $\cM_L^\circ$
(see~\cite[Theorem~4.7]{Huybrechts_birational}, \cite[\S4.4]{HuybrechtsHK}): there is no harm to choose one or the other. Our slogan may be summarized as:
\[
\left(\Sigma^{[2]}, \tau^{[2]}\right) = \lim\limits_{t\to 0} \left(\Fano{Y_t}, \sigma_t\right) = \left(\BL_\Sigma\Fano {Y_0}, \tilde\sigma\right).
\]

From the point of view of~\cite[\S5]{BCSball}, the period map of the moduli space parametrising such pairs of IHS fourfolds with an order three automorphism as above is not injective and each choice of a limit point corresponds to the choice of a different choice of chamber of $K(T)$-generality. Assuming that the hyperplane section $x_5 = 0$ is an irreducible genus four curve, this situation can happen only if $\Sigma$ contains at least three lines permuted by the automorphism, for the following reason. The automorphism~$\tau$ fixes pointwise its hyperplane section $x_5 = 0$.
Would $\Sigma$ contain an invariant line (this happens for instance when $\Sigma$ has only one or two lines), it would be globally invariant. Either this line would contain two isolated fix points or it would be pointwise fixed, both are not possible.

\bibliographystyle{amsplain}
\bibliography{Biblio}

\end{document}